\documentclass{article}
\usepackage{graphicx} 
\usepackage{amsmath}
\usepackage{amsthm}
\usepackage{amssymb}
\usepackage{mathtools}
\usepackage[english]{babel}
\usepackage{mymacros}

\usepackage[margin=1.5in]{geometry}
\usepackage{color}
\usepackage{hyperref}
\usepackage{bm}
\usepackage{graphicx}
\numberwithin{equation}{section}

\title{Cyclicity via {weak$^\ast$ sequential cyclicity} in Radially weighted Besov spaces}
\author{Anusrika Datta and Stefan Richter}
\date{\today}

\newcommand{\dB}{\partial \mathbb B_d}

\newcommand{\MH}{\mathrm{Mult}( H)}

\newcommand{\la}{\langle}
\newcommand{\ra}{\rangle}
\newcommand{\Bd}{\mathbb B_d}

\begin{document}

\maketitle

\begin{abstract}
A radially weighted Besov space $H$ is a space of holomorphic functions on the unit ball
$\mathbb{B}_d \subseteq  \mathbb{C}^d$ whose $N$-th radial derivative is square integrable with
respect to a given admissible radial measure. We write $Mult(H)$ for its multiplier algebra.
The cyclic vectors in $H$ are those functions $f$ whose multiplier multiples are dense in $H$.

We call a multiplier
$f \in Mult(H)$  \emph{weak$^\ast$ sequentially cyclic} if its multiplier multiples
are weak$^\ast$ sequentially dense in $Mult(H)$. It is immediate that every weak$^\ast$
sequentially cyclic multiplier is cyclic, and it turns out that the two notions coincide whenever $H$
has the complete Pick property. However, in more general radially weighted Besov spaces there may be multipliers that are cyclic, but not weak$^\ast$ sequentially cyclic.

For bounded holomorphic functions $f$ with no zeros in $\mathbb{B}_d$, we obtain a condition
on $\log f$ that implies the cyclicity of $f$ in $H$ and yields invertibility properties for
$1/f$ within an associated Smirnov-type class. This condition is formulated in terms of
weak$^\ast$ sequentially cyclic multipliers and can often be verified using a comparison
principle: if $f, g \in Mult(H)$ satisfy $|f| \leq |g|$ and if $f$ is weak$^\ast$ sequentially
cyclic, then $g$ is also weak$^\ast$ sequentially cyclic.

These results provide new insights into cyclicity phenomena in radially weighted Besov
spaces in settings where $H$ fails to be a complete Pick space.
\end{abstract}

\noindent \textbf{Keywords:}
{Weighted Besov space \and multiplier algebra \and bounded argument \and cyclic vectors}

\noindent \textbf{Subjclass:}
[2020]{Primary 47B32, 47A16; Secondary 32A37, 32H15, 30H25.}

\section{Introduction}

Cyclicity is a fundamental concept in the study of invariant subspaces of analytic function spaces and has deep connections with operator theory, factorization theory, and the function-theoretic properties of multipliers.

Let $H$ be a Hilbert space of holomorphic functions on the unit ball $\mathbb{B}_d$, and let $\mathrm{Mult}(H)=\{\phi:\Bd \to \C; \phi f \in H \text{ for all }f\in H\}$ be its multiplier algebra. We have provided  definitions of all  required concepts in Section 2 and 3. For now just recall that a function $f \in H$ is said to be \emph{cyclic}, if the smallest closed multiplier invariant subspace generated by $f$  coincides with $H$. The problem of characterizing cyclic vectors has been extensively studied in functional analysis.  In the one-dimensional setting of the Hardy space of the unit disc, cyclic functions are precisely the outer functions,  \cite{Beurling1948}. Extending this type of characterization to other spaces, such as the Bergman space and the Dirichlet space, has been an active area of research; see e.g. \cite{HeKoZhuBook}, \cite{ElFaKeMaRaBook}. In these settings, cyclicity is related to the zero sets, boundary behavior, and the structure of invariant subspaces. In several variables, fewer results are available.  If $d=2$, then in \cite{Kosinskivavitsas2023} the authors  determined  necessary and sufficient conditions for polynomials to be cyclic in weighted Dirichlet type spaces of the unit ball. These results have been generalized to certain to smooth functions and higher dimensions in  \cite{Ziarati2025}. For some general cyclicity conditions for  the Drury--Arveson space $H^2_d$ and more general weighted Besov spaces that have the so-called Pick property, we refer the reader to \cite{AlemanPerfektRichterSundbergSunkes2023} and \cite{Richteriteratedlog}. In this paper we will say that $H$ is a complete Pick space, if there is a Hilbert space norm on $H$ such that the corresponding reproducing kernel is a normalized complete Pick kernel, i.e. $k_w(z)= \frac{1}{1-\la u(z), u(w)\ra}$ for some function $u$ from $\Bd$ into the unit ball of $\ell^2$.  One of the advantages of Pick spaces is that frequently questions about functions in the space can be reduced to questions about functions in the multiplier algebra. In this paper our work will directly focus on the multiplier algebra, and we will establish results for spaces that don't necessarily have the Pick property. Thus, while our results are motivated by \cite{AlemanPerfektRichterSundbergSunkes2023} and \cite{Richteriteratedlog},  our theorems will be valid for weighted Besov spaces $ B^N_\omega$ that lie between the Drury--Arveson space $H^2_d$ and the Hardy space $H^2(\dB)$ a range of spaces that are known to not have the Pick property.

Central to our work is the concept of \emph{weak$^\ast$sequential cyclicity}.

\begin{defn}
A function $\phi \in \mathrm{Mult}(H)$ is said to be 
\textbf{\emph{weak$^\ast$ sequentially cyclic}} if there exists a sequence 
$\{u_n\} \subseteq  \mathrm{Mult}(H)$ such that
\[
u_n\phi \xrightarrow{\;\mathrm{weak}^\ast\;} 1.
\]
\end{defn}

The spaces $H=B^N_\omega$ that we will consider, all contain their multipliers as a dense subset. In that case this notion is formally stronger than  cyclicity: if $\phi \in \mathrm{Mult}(B^{N}_{\omega})$ is 
weak$^\ast$ sequentially cyclic, then $\phi$ is cyclic in 
$B^{N}_{\omega}$ (see Lemma \ref{lem:*cyclic is cyclic}). While the converse holds for complete Pick spaces (see Corollary \ref{cor:Pick}), 
in Section \ref{Sec:Example} we will provide an 
 example  showing that for $d\ge 2$ there are $H^\infty(\dB)$-functions that are cyclic in $H^2(\dB)$, but 
not  weak$^\ast$ sequentially cyclic in $H^\infty(\dB)$. Thus, the new concept naturally fits within the existing 
framework, and it has the potential for additional structural control.

We will see that weak$^\ast$ sequential cyclicity has many of the same elementary properties that cyclicity has: Under suitable 
hypotheses weak$^\ast$ sequential cyclicity is preserved under 
products and quotients by weak$^\ast$ sequentially cyclic multipliers.  

We will now restrict our discussion to the weighted Besov spaces  $H=B^N_\omega$. In that context  we will show the following result, see Theorem \ref{thm:f less than g}. 

\begin{thm} If $\phi, \psi \in \mathrm{Mult}(B^N_\omega)$ with $|\phi(z)|\le |\psi(z)|$ for all $z\in \Bd$, and if $\phi$ is weak$^\ast$ sequentially cyclic, then so is $\psi$.\end{thm}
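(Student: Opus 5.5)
The approach is to reduce to a division lemma in $\mathrm{Mult}(B^N_\omega)$. The quotient $\theta=\phi/\psi$ need not be a multiplier. However, a quotient of the form $\phi^{m}/\psi^{m-1}$ with $m$ large enough (depending on $N$) should be one. Cyclicity of $\phi$ then transfers to a power of $\psi$, and hence to $\psi$.

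First I note that $\psi$ has no zeros in $\Bd$. If $u_n\phi\to1$ weak$^\ast$, then $u_n\phi\to1$ pointwise, since weak$^\ast$ convergence in $\mathrm{Mult}(H)$ implies pointwise convergence (evaluate against kernel functions). Hence $\phi(z)\neq0$ for all $z$, and so $\psi(z)\neq 0$ because $|\phi|\le|\psi|$. In particular $\theta=\phi/\psi$ is holomorphic on $\Bd$ with $|\theta|\le1$.

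The key lemma I would prove is the following: if $\phi,\psi\in\mathrm{Mult}(B^N_\omega)$ and $|\phi|\le|\psi|$, then $h=\phi^{N+1}/\psi^N=\theta^N\phi\in\mathrm{Mult}(B^N_\omega)$. To check this, I would use the description of multipliers of radially weighted Besov spaces by radial derivatives: $\eta$ is a multiplier iff $\eta\in H^\infty$ and $R^j\eta$ gives a suitable Carleson-type multiplier condition for $j\le N$. Alternatively I would use the Leibniz rule directly, i.e.\ estimate $R^N(hf)$ for $f\in B^N_\omega$.

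Expanding $R^j(\theta^N\phi)$ by the Leibniz and quotient rules, each term is a product of some $R^{a}\phi$, $R^{b}\psi$ with total order at most $N$, times a factor $\phi^{p}/\psi^{q}$. I expect each term to have at most $N$ differentiated factors, so at most $N$ powers of $\psi$ end up in the denominator beyond those matched by $\phi$'s. Since $N+1$ copies of $\phi$ are available, every term then carries a bounded factor $\theta^{k}$ with $k\ge0$ times at most one undifferentiated multiplier. The conclusion is that $|R^j h|\le C\sum|R^{a_1}\phi\cdots R^{a_r}\psi|$ with $\sum a_i\le j$.

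One then has to show that such products of derivatives of multipliers satisfy the multiplier estimate. This is the main obstacle. Pointwise bounds on products of several derivatives do not obviously yield the Carleson-measure condition. I would handle it by applying the derivatives one at a time: for a multiplier $\eta$ and $f\in B^N_\omega$, the function $(R^a\eta)(R^{b}f)$ is controlled in the appropriate weighted $L^2$ norm. If the paper has an earlier lemma of this kind (e.g.\ that $\mathrm{Mult}(B^N_\omega)$ is closed under bounded holomorphic functional operations of this form), I would invoke it instead.

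Granting the lemma, the rest is formal. By the paper's result that products of weak$^\ast$ sequentially cyclic multipliers are weak$^\ast$ sequentially cyclic, $\phi^{N+1}$ is weak$^\ast$ sequentially cyclic. So there are $w_n\in\mathrm{Mult}$ with $w_n\phi^{N+1}\to1$ weak$^\ast$. Put $v_n=w_nh\psi^{N-1}\in\mathrm{Mult}(B^N_\omega)$. Then
\[
v_n\psi=w_n\,h\,\psi^N=w_n\phi^{N+1}\to1 \quad\text{weak}^\ast,
\]
so $\psi$ is weak$^\ast$ sequentially cyclic.
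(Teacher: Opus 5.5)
Your outer argument matches the paper's. You produce a multiplier of the form $\phi^m/\psi$, note that $\phi^m$ is weak$^\ast$ sequentially cyclic, and set $v_n=w_n\phi^m/\psi$, so that $v_n\psi=w_n\phi^m\to 1$ weak$^\ast$. The gap is in the key lemma.

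First, the statement $\phi^{N+1}/\psi^{N}\in\mathrm{Mult}(B^N_\omega)$ is not what your Leibniz count gives. Each radial derivative either lands on a copy of $\phi$, which removes one undifferentiated $\phi$, or on a factor $\psi^{-1}$, which adds another $\psi$ to the denominator. So a term of $R^j(\phi^m\psi^{-p})$ carries the coefficient $\phi^{m-k}/\psi^{p+l}$ with $k+l\le j$, and this coefficient is bounded (via $|\theta|\le 1$) only when $m\ge p+k+l$. For $m=N+1$ and $p=N$ this requires $k+l\le 1$, which fails once $N\ge 2$. For example, the expansion of $R^2(\phi^3/\psi^2)$ contains $6\phi^3(R\psi)^2/\psi^4=6\theta^3(R\psi)^2/\psi$, together with $6\theta(R\phi)^2/\psi$ and $-12\theta^2(R\phi)(R\psi)/\psi$. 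Each of these has the unbounded factor $1/\psi$, and nothing in your argument makes them cancel. What the count does support is $m=N+1$, $p=1$, with coefficients $\theta^{1+l}\phi^{N-k-l}$. That is all you need, and it also removes the defect that for $N=0$ your factor $\psi^{N-1}$ is not a multiplier.

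Second, even with the correct exponent, you leave the real analytic content open. That content is the estimate that products $(R^{a_1}\eta_1)\cdots(R^{a_r}\eta_r)(R^c f)$, with multipliers $\eta_i$, $f\in B^N_\omega$ and $\sum a_i+c=N$, are bounded in $L^2(\omega)$ by a constant times $\|f\|_{B^N_\omega}$. Pointwise bounds do not give this. The paper does not prove it either. It quotes Theorem 3.2 of Aleman--Perfekt--Richter--Sundberg--Sunkes: $|\phi|\le|\psi|$ implies $\phi^n/\psi\in\mathrm{Mult}(B^N_\omega)$ for some $n$. It then runs exactly your last paragraph, using $(u_k\phi)^n\to 1$ weak$^\ast$.

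Your proof becomes complete once you restate the lemma as $\phi^n/\psi\in\mathrm{Mult}(B^N_\omega)$ and either cite that theorem or prove the product estimate.
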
 
If $ B^N_\omega$ happens to be a Pick space, then this result also follows from the equivalence of the cyclicity concepts mentioned above and Theorem 3.1 of \cite{AlemanPerfektRichterSundbergSunkes2023}. In fact, our proof  follows the strategy of \cite{AlemanPerfektRichterSundbergSunkes2023}.

In  \cite{AlemanHartzMcCarthyRichter2017} the authors defined the Pick-Smirnov class of a Pick space $H$ by $$N^{+}(H)
=
\left\{
\frac{u}{v} \;:\; u, v \in \mathrm{Mult}(B^{N}_{\omega}),
\ \text{and } v \text{ is  cyclic in }H
\right\}.$$

Analogously, we define
\begin{equation}
N^{+}_{\ast}(B^{N}_{\omega})
=
\left\{
\frac{u}{v} \;:\; u, v \in \mathrm{Mult}(B^{N}_{\omega}),
\ \text{and } v \text{ is weak$^\ast$ sequentially cyclic}
\right\}
\end{equation}
Thus, $N^{+}_{\ast}(B^{N}_{\omega})\subseteq N^{+}(B^{N}_{\omega})$ with equality, whenever $B^N_\omega$ is a Pick space. Since cyclic functions have no zeros, it is clear that functions in $N^{+}_{\ast}(B^{N}_{\omega})$ are analytic in $\Bd$. Furthermore, it is elementary to show that a multiplier $\phi$ is weak$^\ast$ sequentially cyclic, if and only if $1/\phi\in N^{+}_{\ast}(B^{N}_{\omega})$. This may be considered to be a tautology, but it shifts the viewpoint from smallness and boundary zero sets of multipliers to boundary growth and exceptional sets of functions in $N^{+}_{\ast}(B^{N}_{\omega})$.

We will see that if  $\phi\in \mathrm{Mult}(B^{N}_{\omega})$ has  no zeros in  $\mathbb{B}_d$ and if it  has bounded argument, then  $\log \phi $ is a ratio of multipliers with no zeros in $\mathbb{B}_d$, see Theorem \ref{thm:log=ratio}. One might say that $\log \phi$ is in the Pick-Nevanlinna class. Additionally, if such $\phi$ is $weak^\ast$-sequentially cyclic, then $\log \phi \in N^{+}_{\ast}(B^{N}_{\omega})$. 
Our main result is a weak converse of this.

\begin{thm}
Let $f \in B^{N}_{\omega} \cap H^\infty$ and assume $f$ has no zeros in 
$\mathbb{B}_d$ and has bounded argument. 

  If ~ $ \log f \in N^{+}_{\ast}(B^{N}_{\omega}),$
then $f$ is cyclic in $B^N_\omega$. In addition,
$\frac{1}{f} \in N^{+}(B^{N}_{\omega})$ and $f \in N^{+}_{\ast}(B^{N}_{\omega})$.
\end{thm}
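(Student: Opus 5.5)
Write $\log f = u/v$ with $u,v\in\mathrm{Mult}(B^N_\omega)$ and $v$ weak$^\ast$ sequentially cyclic. The idea is to approximate $1/f = e^{-\log f}$ by multipliers after multiplying by a suitable power of $v$, and to make $f$ times these approximants tend weak$^\ast$ to a cyclic multiplier. Since $f$ has bounded argument and is bounded, $\log f = \log|f| + i\arg f$ has bounded imaginary part and real part bounded above. Hence $w=-\log f + C$ (with $C=\log\|f\|_\infty$) has $\operatorname{Re} w\ge 0$ and bounded imaginary part.

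The plan is to use the functions $f_t = f^t = e^{t\log f}$ for $0<t\le 1$, or equivalently the functions
\[
g_\epsilon = \frac{1}{f+\epsilon}.
\]
I will try the first. We have $f\cdot f^{-t}= f^{1-t}\to 1$ boundedly and pointwise as $t\to 1$, but $f^{-t}$ is not a multiplier. Instead I will consider the truncation
\[
h_n = \exp\bigl(-\log f\,\cdot \chi_n\bigr),
\]
which is awkward. So I switch to the following cleaner device.

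Multiply by $v$. By the earlier results on products, $v^k$ is weak$^\ast$ sequentially cyclic for every $k$. The key is the function
\[
\frac{v}{v - u/n}\ \text{-type expressions},
\]
or more simply
\[
F_n = \frac{v}{v+\frac1n u} = \frac{1}{1+\frac1n\log f}.
\]
Because $\operatorname{Re}(-\log f)$ is bounded below and $\operatorname{Im}\log f$ is bounded, the quantity $1+\frac1n\log f$ is bounded away from $0$ after adding a constant. By Theorem~\ref{thm:log=ratio}-type results (bounded-argument functions yield multipliers via functional calculus), $F_n$ should be a multiplier with $\sup_n\|F_n\|_{\mathrm{Mult}}<\infty$.

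More directly, I will use the comparison principle (Theorem~\ref{thm:f less than g}) on the multiplier $\psi = v\,f^{s}$ against $v$ itself. The estimate $|\log f|\,|v| = |u|$ gives $|v|\le |u|/|\log f|$, and $|f|^s|\log f|$ is bounded. Hence
\[
|v|\cdot |f|^{s}\,|\log f|\ \lesssim\ |u|\,|f|^s,
\]
which does not directly help. Instead I compare $|v|\le |u|/|\log f|$ with $|f|$: on the set where $|f|$ is small, $|\log f|$ is large. The bound $|\log f|\ge \log(1/|f|)$ yields
\[
|v|\,\log\frac{\|f\|_\infty}{|f|}\le |u| + C|v|.
\]
The decisive inequality I aim for is a domination of the form $|v\,\phi(\log f)|\le c|f|$, where $\phi(x)=e^{x}$ composed appropriately. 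Concretely, the function $G = v\,e^{\log f}=vf$ satisfies $|G|\le \|v\|\,|f|$, so $G$ is dominated by $f$. Cyclicity of $f$ will then follow once $vf$ is shown to be cyclic, since then $[f]\supseteq[vf]=B^N_\omega$.

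So the plan reduces to showing that $vf$ is cyclic, ideally weak$^\ast$ sequentially cyclic. Since $v$ is weak$^\ast$ sequentially cyclic, by the product result it suffices to handle $f$ relative to $v$: find multipliers $a_n$ with $a_n f v\to v$ weak$^\ast$. I take
\[
a_n = v^{?}\,\frac{1-f^{1/n}}{\ldots}
\]
or, more promisingly, $a_n = \sum_{k<n}$ partial products of $\exp(-\log f)$ written via $(u/v)^k/k!$ multiplied by $v^n$. The multiplier bounds on such polynomials in $u/v$ multiplied by powers of $v$ are the main obstacle: one needs uniform multiplier norm control. For this I would use the bounded-argument hypothesis to write $f^{1/n}$ as a multiplier with uniformly bounded norm (via the functional-calculus lemma behind Theorem~\ref{thm:log=ratio}). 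I would then use
\[
1-f^{1/n}=\int_0^{1/n} f^s\log f\,ds=\frac{u}{v}\int_0^{1/n} f^s\,ds,
\]
so that $v(1-f^{1/n})$ is a multiplier of norm $\lesssim \|u\|/n\to0$.

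Then $v f^{1/n}\to v$ in multiplier norm, so $v\in[f^{1/n}]$ weak$^\ast$-sequentially. Iterating with $f=(f^{1/n})^n$ and the product lemma, $v^n$ lies in the weak$^\ast$ sequential closure of the multiplier multiples of $f$. Since $v^n$ is weak$^\ast$ sequentially cyclic, this gives that $f$ is weak$^\ast$ sequentially cyclic — in particular cyclic. That yields $f\in N^+_\ast$ trivially and $1/f\in N^+_\ast\subseteq N^+$. If the iteration fails to give weak$^\ast$ cyclicity, I fall back to Hilbert-space closure, which gives cyclicity and $1/f\in N^+$, while $f\in N^+_\ast$ is immediate as $f=f/1$.
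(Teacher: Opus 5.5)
\paragraph{The gap.} Your argument fails at the step from $vf^{1/n}\to v$ to ``$v\in[f^{1/n}]$'' followed by the iteration with $f=(f^{1/n})^n$. Norm convergence $vf^{t}\to v$ as $t\to0$ only shows that $v$ lies in the closure of $\bigcup_{t>0}\mathrm{Mult}(B^N_\omega)f^{t}$. It does not place $v$, or any power of $v$, in the closed ideal generated by one \emph{fixed} power $f^{t}$, and that is what the iteration needs. To get from multiples of $f$ back to a power of $v$ you must \emph{divide} by powers of $f$. That is, you need multipliers $a_j\to f^{-s}$ pointwise with $\sup_j\|a_jv^nf\|_{\mathrm{Mult}}<\infty$, and your proposal never constructs them.

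This is a real gap, not a presentation issue. Take $d=1$, $H=H^2(\mathbb{D})$ and $f=S=\exp\bigl(-\frac{1+z}{1-z}\bigr)$. Then $\log S=u/v$ with $u=-(1+z)$ and $v=1-z$, and $v$ is weak$^\ast$ sequentially cyclic in $H^\infty$. Moreover $\|S^t\|_\infty\le 1$ and $\|v(S^t-1)\|_\infty=\|u\int_0^tS^s\,ds\|_\infty\le 2t$. So every estimate you actually use holds, yet $S$ is not cyclic. What excludes $S$ is the bounded-argument hypothesis, and your argument never uses it in an essential way.

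\paragraph{A second problem.} $f$ is only assumed to lie in $B^N_\omega\cap H^\infty$. Neither Theorem~\ref{thm:log=ratio} nor Theorem~\ref{thm:Iterated_Lem4.1} makes $f^{1/n}$ a multiplier, since both start from a multiplier. For the same reason, your closing claims that $f$ is weak$^\ast$ sequentially cyclic and that $1/f\in N^+_\ast$ are not available; the theorem only asserts $1/f\in N^+$.

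\paragraph{How the paper handles it.} The right tool is Theorem~\ref{thm:Iterated_thm4.2}. Normalize $\|f\|_\infty\le1$ and write $f=e^{-u/v}$. Then $v^nf^\alpha\in\mathrm{Mult}(B^N_\omega)$ with norms bounded uniformly in $\alpha\in[0,1]$. The paper then performs the division as follows. Take $\phi_jv\to1$ weak$^\ast$ with $\|\phi_jv\|_{\mathrm{Mult}}\le M$, and set $\gamma=1/M$. The multipliers $e^{\alpha\gamma\phi_ju}$ tend pointwise to $f^{-\alpha\gamma}$, and
\[
e^{\alpha\gamma\phi_ju}\,v^nf^\alpha=v^n\exp\Bigl(-\alpha\frac{u}{v}\bigl(1-\gamma\phi_jv\bigr)\Bigr).
\]
This product is uniformly bounded by Theorem~\ref{thm:Iterated_thm4.2}, because $\Re\bigl(\frac{u}{v}(1-\gamma\phi_jv)\bigr)\ge -A$, where $A$ bounds $|\Im(u/v)|=|\Im\log f|$. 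This is exactly where bounded argument enters.

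Hence $v^nf^{\alpha(1-\gamma)}\in[v^nf^\alpha]_\ast$. Iterating along $\alpha_k=(1-\gamma)^k\to0$, together with the uniform bound, gives $v^n\in[v^nf]_\ast$, so $[v^nf]_\ast=\mathrm{Mult}(B^N_\omega)$. Then $v^nf$ is cyclic, and therefore so is $f$. Finally $1/f=v^n/(v^nf)\in N^+(B^N_\omega)$ and $f=(v^nf)/v^n\in N^+_\ast(B^N_\omega)$.
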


This will be Theorem \ref{thm:logN+*}. If $B^N_\omega$ is a Pick space, then $N^{+}_{\ast}(B^{N}_{\omega})=N^{+}(B^{N}_{\omega})$, and this Theorem is covered by Theorem 1.1 of \cite{Richteriteratedlog}. In fact, in that case such $f$ is cyclic in $B^N_\omega$, if and only if $\log f\in N^+(B^N_\omega)$.  
In particular, in \cite{AlemanRichterDirichletspace2026}, it has been shown that this necessary and sufficient condition holds for functions in the Dirichlet space on the disc even without the hypothesis of bounded argument.

In one of our proofs in Section 3 we have used Lemma 3.3 of \cite{AlemanHartzMcCarthyRichter2023}. In the Appendix we included a shortened proof of that theorem.

\section{The weighted Besov spaces \texorpdfstring{$B^N_\omega$}{BN omega}}
Let $d $ be a positive integer, and consider the open unit ball $\mathbb{B}_d \subseteq  \mathbb{C}^d$. Let $Hol(\mathbb{B}_d)$ denote  the space of holomorphic functions on $\mathbb{B}_d$. We define a \textbf{\emph{radially weighted Besov space}} associated with a radial measure $\omega$ and a non-negative integer $N$ by
\[
B^N_\omega := \bigl\{ f \in Hol(\mathbb{B}_d) : R^N f \in L^2(\omega) \bigr\},
\]
where $R$ is the radial derivative operator
\[
R = \sum_{j=1}^{d} z_j \frac{\partial}{\partial z_j}
\]
and  $\omega$ is assumed to be an admissible  radial measure on $\overline{\Bd}$. This means  it can be expressed as
\[
d\omega(z) = d\mu(r) \, d\sigma(w), \quad z = r w,
\]
with $\sigma$ the normalized rotation-invariant measure on the sphere $\partial \mathbb{B}_d$, and $\mu$ a Borel measure on $[0,1]$ such that
\[
\mu((r,1]) > 0 \quad \text{for all } 0<r<1.
\]

\medskip

\noindent
\textbf{Remark.} If $\mu$ has a point mass at $r=1$, then we define the $L^2(\omega)$-norm of a function $f \in Hol(\mathbb{B}_d)$ by
\[
\|f\|^2_{L^2(\omega)} = \int_{\mathbb{B}_d} |f|^2 \, d\omega + \mu(\{1\}) \, \|f\|^2_{H^2(\partial \mathbb{B}_d)},
\]
where $H^2(\partial \mathbb{B}_d)$ denotes the Hardy space of holomorphic functions $f$ satisfying
\[
\|f\|^2_{H^2(\partial \mathbb{B}_d)} = \sup_{0 \le r < 1} \int_{\partial \mathbb{B}_d} |f(r z)|^2 \, d\sigma(z) = \lim_{r \to 1^-} \int_{\partial \mathbb{B}_d} |f(r z)|^2 \, d\sigma(z) < \infty.
\]

\medskip

\noindent
The space $B^N_\omega$ is equipped with the norm
\[
\|f\|^2_{B^N_\omega} =
\begin{cases}
\|f\|^2_{L^2(\omega)}, & N=0,\\[0.5em]
\omega(\mathbb{B}_d) \, |f(0)|^2 + \|R^N f\|^2_{L^2(\omega)}, & N>0.
\end{cases}
\]

\medskip

\noindent
Under these definitions, $B^N_\omega$ becomes a Hilbert function space over $\mathbb{B}_d$, in the sense that point evaluations are continuous linear functionals.

\medskip

For example if $d\omega = d\sigma \, d\delta_{1}$. Then the scale of spaces
$\{B^{N}_{\omega}\}_{N \ge 0}$ satisfies the following inclusions,
according to the value of $N$ relative to $\frac{d-1}{2}$, see \cite{Richteriteratedlog}.

\medskip
\noindent\textbf{The Pick space case ($N \geq \frac{d-1}{2}$):}
In this range, $B^{N}_{\omega}$ is a  Pick space and we have
\[
B^{N}_{\omega}
\subseteq 
H^2_d.
\]
\medskip
\noindent\textbf{The Non-Pick space case ($0 \le N < \frac{d-1}{2}$):}
In this case, $B^{N}_{\omega}$ is not a Pick space and we obtain
\[
H^2_d
\subseteq 
B^{N}_{\omega}
\subseteq 
B^{0}_{\omega}
=
H^2(\partial \mathbb{B}_d).
\]
 In the present work, our main interest is on those spaces which are not Pick spaces, although all of the results we obtain also hold in the setting of  Pick spaces. In fact, as mentioned in the Introduction, for  Pick spaces more general results have been established in \cite{ AlemanPerfektRichterSundbergSunkes2023, Richteriteratedlog}. 
 
 We will use the following two theorems from \cite{Richteriteratedlog}: 

\begin{thm}[Theorem 4.2 in \cite{Richteriteratedlog}]\label{thm:Iterated_thm4.2}
Let $N \in \mathbb{N}$ and $\omega$ be an admissible radial measure. 
 There exist constants $C > 0$ and $n \in \mathbb{N}$ such that whenever $\phi, \psi \in \mathrm{Mult}(B^N_{\omega})$ with
\[
\operatorname{Re}\!\left(\frac{\phi}{\psi}\right) \geq 0,
\]
then
\[
\psi^n e^{-\frac{\phi}{\psi}} \in \mathrm{Mult}(B^N_{\omega}),
\]
and
\[
\left\| \psi^n e^{-\frac{\phi}{\psi}} \right\|_{\mathrm{Mult}(B^N_{\omega})}
\leq
C \left( \|\psi\|_{\mathrm{Mult}(B^N_{\omega})} + \|\phi\|_{\mathrm{Mult}(B^N_{\omega})} \right)^n.
\]
\end{thm}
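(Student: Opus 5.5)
Since $\operatorname{Re}(\phi/\psi)\ge 0$ on $\Bd$, put $F=\phi/\psi$ and $h=e^{-F}$. Then $|h|\le 1$ and $h$ is holomorphic on $\Bd$ wherever $\psi\neq 0$; in fact $\psi^n e^{-F}$ extends holomorphically across the zeros of $\psi$ in the way we need. The plan is to estimate $\|\psi^n h\, g\|_{B^N_\omega}$ for an arbitrary $g\in B^N_\omega$. By the definition of the norm, it is enough to control $\|R^N(\psi^n h g)\|_{L^2(\omega)}$ together with the value at the origin. The first step is to expand $R^N(\psi^n e^{-F} g)$ by the Leibniz rule. Each term is a product of $R^{j}g$ with factors of the form $R^{a}\psi$ and $R^{b}e^{-F}$. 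By Faà di Bruno, $R^b e^{-F}$ is $e^{-F}$ times a polynomial in $RF,\dots,R^bF$. Next write $R^kF$ via the quotient rule as (a polynomial in derivatives of $\phi$ and $\psi$)$/\psi^{k+1}$. At most $N$ derivatives fall on $e^{-F}$, so the total power of $\psi$ in any denominator is at most $2N$, and any $n\ge 2N$ (say $n=2N+1$) clears all denominators. The key identity is that every term has the shape
\[
e^{-F}\,\psi^{m}\,\prod_i R^{a_i}\phi\,\prod_l R^{b_l}\psi\; R^{j} g ,\qquad m\ge 0,
\]
where $\sum a_i+\sum b_l + j = N$ and the number of $\phi,\psi$ factors is at most $n$.

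The second step is to bound each such term in $L^2(\omega)$. Since $|e^{-F}|\le 1$, we may drop this factor. What remains is a Leibniz-type term coming from a product of $n$ multipliers applied to $g$. The tool I would use is the standard characterization of multipliers on $B^N_\omega$ from Richteriteratedlog: for $\phi\in \mathrm{Mult}(B^N_\omega)$ and $0\le k\le N$, the map $g\mapsto (R^k\phi)(R^{N-k} g)$ is bounded from $B^N_\omega$ into $L^2(\omega)$, with norm $\lesssim\|\phi\|_{\mathrm{Mult}}$. A term containing several differentiated factors is handled by the same kind of estimate applied to products. Concretely, $\prod_i R^{a_i}\phi_i\, R^j g$ is compared with a term appearing in $R^N(\phi_1\cdots\phi_r g)$ by iterating: view $\phi_2\cdots\phi_r g$ as an element of $B^N_\omega$ whose intermediate derivatives $R^{N-a_1}(\cdot)$ are controlled. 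This iteration is the \emph{main obstacle}. Mixed products of derivatives are not individually controlled by the multiplier norm; only the full $R^N$ of the product is. I would first try an induction on the number of factors, using Carleson-measure estimates for the radial derivatives of multipliers. These give that $|R^k\phi|^2 |R^{N-k}g|^2 d\omega$ is controlled, and the theory of admissible radial measures gives the sub-derivative bounds $\|R^{j}g\|$ in the appropriate weighted spaces. Combined with Hölder this should yield
\[
\|\text{term}\|_{L^2(\omega)}\le C\,(\|\phi\|_{\mathrm{Mult}}+\|\psi\|_{\mathrm{Mult}})^n\|g\|_{B^N_\omega}.
\]

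Finally, summing the finitely many terms, whose number depends only on $N$, and adding the point-evaluation term $|\psi(0)^n e^{-F(0)}g(0)|\le \|\psi\|_\infty^n|g(0)|$ gives the multiplier bound. The constants $C$ and $n$ depend only on $N$ and $\omega$. The case $\psi$ with zeros needs one more step. I would replace $\psi$ by $\psi$ and $\phi$ by $\phi+\epsilon\psi$, or approximate by dilations, apply the estimate, and pass to the limit. This is justified because uniform multiplier bounds are preserved under pointwise limits, by weak$^\ast$ compactness of multiplier balls.
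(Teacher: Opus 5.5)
The paper gives no proof of this statement; it is quoted from \cite{Richteriteratedlog}, so your argument has to stand on its own. There is a genuine gap: the multi-factor estimate that carries the analytic content of the theorem is left unproved.

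Your algebraic reduction is fine. With $F=\phi/\psi$, each term of $R^N(\psi^n e^{-F}g)$ is $e^{-F}$ times a product homogeneous of degree $n$ in $\phi,\psi$ and their radial derivatives, times some $R^jg$. Taking $n\ge 2N$ clears all powers of $\psi$ from the denominators.

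Your last paragraph is unnecessary, and as stated it would not work: neither replacing $\phi$ by $\phi+\epsilon\psi$ nor dilating removes the zeros of $\psi$ in general. No regularization is needed, because $F$ is holomorphic by hypothesis, every term is holomorphic on $\Bd$ once the denominators are cleared, and the identity extends across $\psi^{-1}(0)$ by continuity.

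The gap is the analytic estimate. You need
\[
\Bigl\|\prod_{i=1}^m R^{k_i}\phi_i\cdot R^{j}g\Bigr\|_{L^2(\omega)}\le C\prod_{i=1}^m\|\phi_i\|_{\mathrm{Mult}(B^N_\omega)}\,\|g\|_{B^N_\omega},\qquad k_i\ge 1,\quad \sum_{i} k_i+j=N,
\]
with $m\ge 2$ differentiated multiplier factors. Already for $N=2$ the term $e^{-F}\psi^{n-4}(\psi R\phi-\phi R\psi)^2 g$ occurs. The single-factor bound $\|(R^k\phi)(R^{N-k}g)\|_{L^2(\omega)}\lesssim\|\phi\|_{\mathrm{Mult}}\|g\|$ does not imply this. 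The route you sketch (induction on the number of factors, Carleson estimates, H\"older) is not carried out. H\"older with exponents $N/k_i$ would need, for instance, Carleson-measure bounds for $|R^k\phi|^{2N/k}\,d\omega$, which are not among the facts you invoke. You flag this yourself as the main obstacle, and the proof stops exactly there.

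Here is one way to close it, granting your single-factor lemma for each space $B^s_\omega$, $1\le s\le N$, with the same $\omega$.
\begin{enumerate}
\item Because $\omega$ is radial, $\|f\|^2_{B^s_\omega}$ is, up to an equivalent norm, $\sum_k w^{(s)}_k\|f_k\|^2_{H^2(\dB)}$ over the homogeneous components $f_k$ of $f$, with $w^{(s)}_k=k^{2s}\int_{[0,1]}r^{2k}\,d\mu(r)$ for $k\ge 1$. Hence $B^s_\omega=[B^0_\omega,B^N_\omega]_{s/N}$ by complex interpolation.
\item Since $\mathrm{Mult}(B^0_\omega)=H^\infty\supseteq\mathrm{Mult}(B^N_\omega)$, each $\phi\in\mathrm{Mult}(B^N_\omega)$ multiplies $B^s_\omega$ with norm $\lesssim\|\phi\|_{\mathrm{Mult}(B^N_\omega)}$.
\item The single-factor lemma in $B^s_\omega$, together with the Leibniz rule, shows that $M_{R^k\phi}$ maps $B^s_\omega$ boundedly into $B^{s-k}_\omega$. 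Composing, $\prod_i M_{R^{k_i}\phi_i}$ maps $B^N_\omega$ into $B^{j}_\omega$.
\item Expand $R^j\bigl(\prod_i R^{k_i}\phi_i\cdot g\bigr)$ by Leibniz: it is the desired term plus terms of the same form with fewer than $j$ derivatives on $g$. Induction on $j$, with base case $j=0$ given by the mapping into $B^0_\omega$, yields the displayed estimate.
\end{enumerate}
With this lemma supplied, the rest of your argument goes through, including the bound $C(\|\psi\|_{\mathrm{Mult}}+\|\phi\|_{\mathrm{Mult}})^n$ via homogeneity.
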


\begin{thm} [Lemma 4.1(b) in \cite{Richteriteratedlog}]\label{thm:Iterated_Lem4.1}
Let $N \in \mathbb{N}$, $\omega$ be an admissible radial measure.

\medskip

 If $\phi \in \mathrm{Mult}(B^N_{\omega})$ is zero-free in $\mathbb{B}_d$ and $j$ is a non-negative integer such that
\[
\phi^j \log \phi \in H^{\infty},
\]
then there exists an integer $n \geq j$ such that
\[
\phi^n \log \phi \in \mathrm{Mult}(B^N_{\omega}).
\]
\end{thm}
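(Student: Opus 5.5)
The plan is to verify directly that $g:=\phi^n\log\phi$ multiplies $B^N_\omega$ into itself. We fix $n\ge j+N$, and we may assume $N\ge 1$, since for $N=0$ the multipliers are the bounded functions. For $f\in B^N_\omega$ the Leibniz rule for the derivation $R$ gives
\[
R^N(gf)=\sum_{k=0}^N\binom{N}{k}R^k g\;R^{N-k}f .
\]
So it suffices to show that every term $R^kg\,R^{N-k}f$ lies in $L^2(\omega)$, with norm $\lesssim\|f\|_{B^N_\omega}$. The value $g(0)f(0)$ is harmless.

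\textbf{Structure of $R^k g$.} We have $R(\phi^n\log\phi)=\phi^{n-1}R\phi\,(n\log\phi+1)$. Inducting on $k$, $R^k g$ is a finite linear combination of terms
\[
\phi^{\,n-m}\,(c\log\phi+c')\prod_{i=1}^m R^{a_i}\phi,\qquad a_i\ge1,\ \ \sum_i a_i=k,\ \ m\le k .
\]
Each further application of $R$ hits one factor: it either raises some $a_i$ by one, or produces one more factor $R\phi$ while the power of $\phi$ drops by one. Since $n-m\ge n-N\ge j$, we can write
\[
\phi^{n-m}\log\phi=\phi^{\,n-m-j}\cdot\phi^j\log\phi .
\]
This is bounded because $\phi\in H^\infty$ (multipliers are bounded) and $\phi^j\log\phi\in H^\infty$. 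Likewise $\phi^{n-m}$ is bounded. The case $k=0$ gives the term $g\,R^Nf$ with $g\in H^\infty$, which is clearly in $L^2(\omega)$. The problem therefore reduces to the following estimate.

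\textbf{Key estimate.} For a multiplier $\phi$, integers $a_1,\dots,a_m\ge1$ and $b\ge0$ with $\sum a_i+b=N$, we want
\[
\Bigl\|\prod_{i=1}^m R^{a_i}\phi\;R^bf\Bigr\|_{L^2(\omega)}\le C\,\|\phi\|_{\mathrm{Mult}}^m\,\|f\|_{B^N_\omega}.
\]
I expect this to be the main obstacle, since it is a Gagliardo--Nirenberg / Carleson-measure type inequality. I would prove it by induction on $m$.

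For $m=1$ we expand $R^N(\phi f)-\phi R^N f$ by Leibniz. This gives $\sum_a\|R^a\phi\,R^{N-a}f\|\lesssim\|\phi\|_{\mathrm{Mult}}\|f\|$, but only for the full sum. To separate the individual terms, I would test against the functions $f$, $Rf$, and so on, and use the fact that the radial structure of $\omega$ makes $R$ act diagonally on homogeneous expansions. An alternative is to invoke the known characterization that $|R^a\phi|^2|R^{N-a}f|^2\,d\omega$ is controlled via Carleson measures for $B^{a}$-type spaces.

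For general $m$, I would use the identity $R^{a}(\phi^m)=\sum(\text{products})$ together with $\phi^m\in\mathrm{Mult}$, and apply the same separation argument; alternatively, use Hölder's inequality combined with the $m=1$ Carleson estimates at different smoothness levels.

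\textbf{Boundary mass.} If $\mu$ has an atom at $1$, the Hardy-norm part of $\|\cdot\|_{L^2(\omega)}$ is handled by applying the same estimates to the dilates $f_r(z)=f(rz)$, $\phi_r$, whose multiplier norms are uniformly bounded, and letting $r\to1$.

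Combining the structure of $R^kg$ with the key estimate shows that $\phi^n\log\phi\in\mathrm{Mult}(B^N_\omega)$ with $n=j+N\ge j$.
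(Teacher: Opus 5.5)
The paper does not prove this statement; it is quoted from \cite{Richteriteratedlog}, so your argument has to stand on its own. Your reduction is correct. The Leibniz expansion checks out, and so does the structure of $R^k(\phi^n\log\phi)$ as a combination of $\phi^{n-m}(c\log\phi+c')\prod_{i=1}^m R^{a_i}\phi$ with $m\le k$. The choice $n\ge j+N$, which makes the prefactors bounded, is also right.

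The gap is that the entire analytic content of the lemma sits in your Key estimate, and you leave it unproved. The routes you sketch do not work as stated:
\begin{itemize}
\item Leibniz applied to $R^N(\phi f)$ only controls the norm of the full sum.
\item ``Testing against $Rf$'' is not available, because $Rf$ lies only in $B^{N-1}_\omega$ and nothing you say shows that $\phi$ multiplies $B^{N-1}_\omega$. Already for $N=2$, splitting $2(R\phi)Rf+(R^2\phi)f$ into its two pieces needs an extra relation such as $(R\phi)f=R(\phi f)-\phi Rf\in B^1_\omega$, and that uses $\phi\in\mathrm{Mult}(B^1_\omega)$.
\item For $m\ge2$, H\"older's inequality would need $L^p$ Carleson-type control of $|R^a\phi|$, which an $L^2$ multiplier bound does not give.
\item The Carleson characterization you propose to invoke is, for a general admissible $\omega$, essentially the statement to be proved.
\end{itemize}

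The missing idea is that multipliers of $B^N_\omega$ multiply every intermediate space; after that, commutators and composition do the work.
\begin{itemize}
\item Write $f=\sum_k f_k$ for the homogeneous expansion and $\mu_k=\int_{[0,1]}r^{2k}\,d\mu(r)>0$. Then $\|f\|^2_{B^s_\omega}\asymp\sum_{k\ge0}\max(k,1)^{2s}\mu_k\|f_k\|^2_{L^2(\sigma)}$, with the atom at $r=1$ included automatically. So $\{B^s_\omega\}_{0\le s\le N}$ is a complex interpolation scale of weighted Hilbert sums.
\item Since $\mathrm{Mult}(B^0_\omega)=H^\infty$ and $\|\phi\|_\infty\le\|\phi\|_{\mathrm{Mult}(B^N_\omega)}$, interpolation gives $\phi\in\mathrm{Mult}(B^s_\omega)$ for $0\le s\le N$.
\item $R$ maps $B^s_\omega$ contractively into $B^{s-1}_\omega$, and $(R^a\phi)h=R\bigl((R^{a-1}\phi)h\bigr)-(R^{a-1}\phi)Rh$. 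Induction on $a$ then yields $\|(R^a\phi)h\|_{B^{s-a}_\omega}\le C\|h\|_{B^s_\omega}$ for $0\le a\le s\le N$.
\item Your Key estimate follows by composition, not H\"older. We have $R^bf\in B^{N-b}_\omega$ with norm at most $\|f\|_{B^N_\omega}$. Multiplying successively by $R^{a_m}\phi,\dots,R^{a_1}\phi$ lowers the index by $a_m,\dots,a_1$ down to $B^0_\omega$, whose norm is the $L^2(\omega)$ norm.
\end{itemize}
Because all functions involved are holomorphic and the norms are diagonal in the homogeneous expansion, your separate dilation argument for the boundary atom becomes unnecessary. With this lemma supplied, your proof goes through with $n=j+N$.
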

 
\section{Multipliers and \texorpdfstring{$weak^\ast$}{weak*} closed ideals}
 Let $\HH$ be a reproducing kernel Hilbert space of analytic functions on $\Bd$. It is clear that linear combinations of certain countable sets of reproducing kernels are dense in $H$, hence $H$ is separable. Since $H$ is separable,  the trace class operators on $H$ are a separable pre-dual of $\HB(H)$, the algebra of all bounded linear transformations on $H$. We write $\MH$ for the multiplier algebra of $H$. Then each element $\phi$ of $\MH$ induces a bounded multiplication operator $M_\phi\in \HB(H)$ and $\MH$ becomes a Banach space with norm $\|\phi\|
_{Mult(H)}=\|M_\phi\|_{\HB(H)}= 
\sup
\bigl\{
\|\phi f\|
:
f \in H,\,
\|f\|\le 1
\bigr\}.$  
$\MH$ is thus  isometrically isomorphic to a $weak^\ast$-closed sub-algebra of $\mathcal{B}(H)$, and it can be identified with the dual of a separable Banach space. As such, it carries a $weak^\ast$-topology, which agrees with the $weak^\ast$-topology of $\mathcal{B}(\HH)$ restricted to $\MH$. One easily checks that point evaluations on $\Bd$ are $weak^\ast$-continuous linear functionals on $\MH$. Then it is a standard fact that a sequence of multipliers $\phi_n \in \MH$ converges in the $weak^\ast$-topology to $\phi\in \MH$, if and only if $\phi_n(z) \to \phi(z)$ for all $z\in \Bd$ and there is $M>0$ such that $\|\phi_n\|_{\MH}\le M$ for all $n \in \mathbb{N}$, (\cite{BrownShields1984}, {Proposition 2}).
 
 If $\phi\in \MH$, then $\MH \phi= \{u\phi: u\in \MH\}$ and we write $$[\phi]_*=\overline{\MH \phi}^{wk^*}\subseteq \MH$$ for the $weak^\ast$-closed ideal generated by $\phi$. We have been unable to answer the following question in all generality.
 
 \begin{quest}  \label{ques:weak*}
 If $\phi, \psi \in \MH$ such that $\psi \in [\phi]_*$, then is there a sequence $u_n\in \MH$ such that $u_n \phi \to \psi$ in the $weak^\ast$-topology?\end{quest}

  If $\HH$ has a normalized Pick kernel, then the answer is yes. That can be deduced  from a result of Davidson, Ramsey, and Shalit, \cite{DavidsonRamseyShalit2015}, Corollary 2.7, which says that for Pick spaces $H$ the $weak^\ast$-closed ideals in $\MH$ are in 1-1 correspondence with the multiplier invariant subspaces of $\HH$. See Corollary \ref{cor:Pick} below for a more direct proof that is based on Lemma 3.3 of \cite{AlemanHartzMcCarthyRichter2023}. However, we do not know the answer for example for $H^\infty(\Bd)$, if $d \ge 2$. Thus, it remains unclear under which conditions a multiplier $\phi$ that satisfies $[\phi]_*=\MH$ is $weak^\ast$ sequentially cyclic.
  
  Let $\mathcal S(\phi)$ be the $weak^\ast$-sequential closure of $\MH \phi$. Then it follows from the Krein-Smulian Theorem (see \cite{Conway1990}), Corollary V.12.7) that $\mathcal S(\phi)$ is $weak^\ast$-closed, if and only if it is $weak^\ast$-sequentially closed. Thus,  the above Question is equivalent to the question of whether $\mathcal S(\phi)$ is $weak^\ast$-sequentially closed. The  structure of $\MH$  can be used to restate the question.
  
  \begin{lem}  \label{lem:weak*} 
  Let $\phi\in \MH$. Then $$\mathcal S(\phi)=[\phi]_*$$ if and only if there is $c>0$ such that for all $\psi\in \mathcal S(\phi)$ with $\|\psi\|_{\MH}\le 1$ there is a sequence $u_n\in \MH$ such that $ \|u_n\phi\|_{\MH}\le c$ for all $n\in \mathbb
  {N}$ and $u_n(z) \phi(z) \to \psi(z)$ for all $z\in \Bd$.
  \end{lem}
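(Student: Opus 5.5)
The plan is to use the Krein--Smulian theorem in its convex form, together with the characterization of weak$^\ast$ convergence of sequences recalled above (\cite{BrownShields1984}): pointwise convergence on $\Bd$ plus a uniform bound on the multiplier norms. Note that $\mathcal S(\phi)$ is a linear subspace of $\MH$, since sums and scalar multiples of weak$^\ast$ convergent sequences converge weak$^\ast$. Also $\MH\phi\subseteq \mathcal S(\phi)\subseteq [\phi]_*$.

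\emph{($\Leftarrow$)} Assume the stated condition with constant $c$. By Krein--Smulian (\cite{Conway1990}, Cor.~V.12.7), it suffices to show that $\mathcal S(\phi)\cap \mathrm{ball}_r$ is weak$^\ast$ closed for every $r>0$. The predual is separable, so the closed balls of $\MH$ are weak$^\ast$ metrizable. Hence it is enough to check weak$^\ast$ sequential closedness of $\mathcal S(\phi)\cap \mathrm{ball}_r$.

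Let $\psi_k\in\mathcal S(\phi)$ with $\|\psi_k\|\le r$ and $\psi_k\to\psi$ weak$^\ast$. Apply the hypothesis to $\psi_k/r$ and rescale. This gives, for each $k$, a sequence $u^{(k)}_n$ with $\|u^{(k)}_n\phi\|\le cr$ and $u^{(k)}_n\phi\to\psi_k$ pointwise.

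Now run a diagonal argument. Fix a countable dense set $\{z_j\}\subseteq\Bd$. For each $k$ choose $n_k$ such that $|u^{(k)}_{n_k}(z_j)\phi(z_j)-\psi_k(z_j)|<1/k$ for $j\le k$. Set $v_k=u^{(k)}_{n_k}$. Then $\|v_k\phi\|\le cr$, and $v_k\phi(z_j)\to\psi(z_j)$ for every $j$.

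To pass from convergence on $\{z_j\}$ to convergence on all of $\Bd$, use normality. A bounded set in $\MH$ is bounded in $H^\infty$, because multiplier norm dominates sup norm. By Montel, every subsequence of $v_k\phi$ has a further subsequence converging locally uniformly. Any such limit agrees with $\psi$ on the dense set $\{z_j\}$, hence equals $\psi$. So $v_k\phi\to\psi$ pointwise on all of $\Bd$ with a uniform bound, which means $v_k\phi\to\psi$ weak$^\ast$ and $\psi\in\mathcal S(\phi)$. Thus $\mathcal S(\phi)$ is weak$^\ast$ closed, and since it contains $\MH\phi$ it equals $[\phi]_*$.

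\emph{($\Rightarrow$)} Assume $\mathcal S(\phi)=[\phi]_*$, so $\mathcal S(\phi)$ is a weak$^\ast$ closed subspace. The key step is a Baire category argument. For $m\in\mathbb N$, let $E_m$ be the set of $\psi\in\mathcal S(\phi)$ that are pointwise limits of sequences $u_n\phi$ with $\|u_n\phi\|\le m$. Each $E_m$ is convex and balanced, and $\bigcup_m E_m=\mathcal S(\phi)$ by the uniform boundedness principle.

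I expect the main obstacle to be showing that $E_m$ is norm closed, or at least closed enough for Baire to apply. For this I will use the diagonal argument from the converse direction. Suppose $\psi_k\in E_m$ converge in norm to $\psi$. Norm convergence gives a norm bound, hence weak$^\ast$ convergence, hence pointwise convergence. The diagonal/Montel step then produces $v_k\phi\to\psi$ pointwise with $\|v_k\phi\|\le m$, so $\psi\in E_m$.

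Next, $\mathcal S(\phi)$ is weak$^\ast$ closed, hence norm closed, hence a Banach space. By Baire, some $E_m$ contains a norm ball $B(\psi_0,\varepsilon)\cap\mathcal S(\phi)$. Since $E_m$ is convex and balanced, $E_m$ contains $\varepsilon$ times the unit ball of $\mathcal S(\phi)$. Rescaling, every $\psi\in\mathcal S(\phi)$ with $\|\psi\|\le1$ lies in $E_{m/\varepsilon}$, which gives the condition with $c=m/\varepsilon$ (rounded up if needed).
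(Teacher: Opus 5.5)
Your proof is correct. The direction ($\Leftarrow$) follows the paper. Krein--Smulian (Conway, Cor.~V.12.7) reduces the claim to weak$^\ast$ sequential closedness of $\mathcal S(\phi)$. A diagonal argument then produces $v_k\phi\to\psi$ pointwise with $\|v_k\phi\|\le cr$. Your countable dense set plus Montel is an equivalent substitute for the paper's choice of $k_n$ with $|u_{nk_n}\phi-\psi_n|<1/n$ on $r_n\Bd$.

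The genuine difference is in ($\Rightarrow$). The paper simply cites (a)$\Rightarrow$(c) of Conway's Theorem V.12.11, applied to the weak$^\ast$ sequentially dense linear manifold $\mathrm{Mult}(H)\phi$ inside $[\phi]_*$. You instead prove that implication from scratch. You run a Baire category argument on the convex balanced sets $E_m$, and reuse the same diagonal/Montel step to show each $E_m$ is norm closed. The checks all go through: $\bigcup_m E_m=\mathcal S(\phi)$ by uniform boundedness, and symmetry plus convexity move the Baire ball to the origin. Rescaling then works because $tE_m=E_{tm}$.

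What each route buys:
\begin{itemize}
\item The citation is shorter. But it tacitly requires viewing $[\phi]_*$ as the dual of a separable Banach space, namely the quotient of the predual of $\mathrm{Mult}(H)$ by the preannihilator of $[\phi]_*$. It also needs the resulting weak$^\ast$ topology to agree with the relative one, so that Conway's theorem applies.
\item Your argument avoids that identification and stays inside $\mathrm{Mult}(H)$. It is self-contained and uses the same diagonal lemma as the engine for both directions. It also shows exactly where the hypothesis $\mathcal S(\phi)=[\phi]_*$ enters: only to guarantee that $\mathcal S(\phi)$ is norm closed, hence a Banach space to which Baire's theorem applies.
\end{itemize}
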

  \begin{proof} The Lemma is a strengthening of one of the directions of the equivalence of (a) and (c) of Theorem V.12.11 of \cite{Conway1990}. Indeed, if $\mathcal S(\phi)=[\phi]_*$, then $\MH\phi$ is sequentially dense in $[\phi]_*$,  hence $(a)\Rightarrow (c)$ of  Theorem V.12.11 of \cite{Conway1990} implies the conclusion as stated in  the Lemma.

  Conversely, assume that there is $c>0$ such that for all $\psi$ in the unit ball of $\mathcal S(\phi)$ there is a sequence $u_n\in \MH$ such that $u_n(z) \phi(z) \to \psi(z)$ for all $z\in \Bd$ and $\|u_n\phi\|_{\MH}\le c$ for all $n$.
  
  By what was stated before the Lemma, we have to show that $\mathcal S(\phi)$ is $weak^\ast$-sequentially closed. Thus, let $ \psi_n \in \mathcal S(\phi)$ and $\psi\in \MH$ such that $\psi_n \to \psi$ in the $weak^\ast$-topology. Thus, there is $M>0$ such that $\|\psi_n\|_{\MH}\le M$ for all $n$ and $\psi_n \to \psi$ pointwise in $\Bd$. Choose a sequence of positive reals $1>r_n \to 1$.

  Fix $n\in \mathbb{N}$. By our hypothesis and a scaling argument there is a sequence $\{u_{nk}\}_{k\in \mathbb{N}}\in \MH$ such that $\|u_{nk}\phi\|_{\MH}\le cM$ and $u_{nk}\phi\to \psi_n$ pointwise in $\Bd$ as $k\to \infty$.
  The unit ball in $\MH$ is a normal family, hence the pointwise convergence in $\Bd$ for norm bounded sequences is equivalent to local uniform convergence. Thus,  there is $k_n\in \mathbb{N}$ such that $|u_{nk_n}(z)\phi(z)-\psi_n(z)|<1/n$ for all $z\in r_n\Bd$. 
  
  Then $u_{nk_n}\phi \to \psi$ in the $weak^\ast$-topology. Indeed, we already noted the uniform boundedness of the norms of $u_{nk_n}\phi$, and if $z\in \Bd$, then for any $n$ such that $|z|<r_n$ we have 
  $$|u_{nk_n}\phi(z)-\psi(z)|\le \frac{1}{n}+|\psi_n(z)-\psi(z)|.$$ Thus, $u_{nk_n}\phi \to \psi$ pointwise in $\Bd$.
  \end{proof}
\begin{cor}
    
\label{cor:Pick} If $H\subseteq \mathrm{Hol}(\Bd)$ has a normalized Pick kernel, then $\mathcal S(\phi)=[\phi]_*$ for all $\phi\in \MH$. Thus, $\phi$ is $weak^\ast$ sequentially cyclic, if and only if $1\in [\phi]_\ast$.
\end{cor}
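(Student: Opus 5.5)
The plan is to verify the criterion of Lemma \ref{lem:weak*}, using as the key input Lemma 3.3 of \cite{AlemanHartzMcCarthyRichter2023}. For a normalized complete Pick kernel, that lemma states (in the form we need) the following. Let $\mathcal M\subseteq H$ be a multiplier invariant subspace. Then every $h\in\mathcal M$ can be written as $h=\sum_j \theta_j g_j$, where the row $\Theta=(\theta_1,\theta_2,\dots)$ is a contractive multiplier from $H\otimes\ell^2$ to $H$, each $\theta_j\in\mathcal M$, and $\sum_j\|g_j\|^2\le \|h\|^2$. For a multiplier $\psi$ there is also the more useful statement that $\mathcal M\cap\MH$ has a contractive row generator. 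Once that criterion holds, the second sentence of the corollary follows from the definitions: if $\mathcal S(\phi)=[\phi]_*$, then $1\in[\phi]_*$ gives $1\in\mathcal S(\phi)$, i.e. $\phi$ is weak$^\ast$ sequentially cyclic, and the converse is trivial.

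\textbf{Step 1 (density in $H$).} Let $\psi\in\mathcal S(\phi)$ with $\|\psi\|_{\MH}\le1$. Since the kernel is normalized, $1\in H$, and weak$^\ast$ convergence of $v_k\phi\to\psi$ in $\MH$ gives weak convergence $v_k\phi=M_{v_k\phi}1\to\psi$ in $H$. Hence $\psi\in\mathcal M:=\overline{\MH\phi}^{H}=[\phi]$.

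\textbf{Step 2 (representation with norm control).} Apply the Aleman--Hartz--McCarthy--Richter lemma to $\mathcal M=[\phi]$. Because $\psi$ is a contractive multiplier, the Pick property lets us factor it through the row: we get $\psi=\Theta\Gamma$, where $\Theta$ is a contractive row multiplier with entries in $[\phi]$ and $\Gamma$ is a contractive column multiplier. I expect this factorization $\psi=\sum_j\theta_j\gamma_j$, with $\|\Theta\|,\|\Gamma\|\le1$, to be exactly the content of Lemma 3.3. Truncating the sum, $\sum_{j\le m}\theta_j\gamma_j$ converges to $\psi$ pointwise with multiplier norms at most $1$.

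\textbf{Step 3 (approximating each $\theta_j$ by multiples of $\phi$).} This is the main obstacle, because $\theta_j\in[\phi]$ is only a norm limit in $H$, not a bounded weak$^\ast$ limit in $\MH$. My first approach is to choose the generating row $\Theta$ more carefully. In a Pick space the invariant subspace $[\phi]$ equals $\Theta H$ with $M_\Theta$ a partial isometry, and the projection onto $[\phi]$ is $M_\Theta M_\Theta^*$. Take $p_k\in\MH$ with $p_k\phi\to P_{[\phi]}1$ in $H$, and use the McCullough--Trent type fact that the row $\Theta$ can be built from the functions $P_{[\phi]}k_w$. I would then try to approximate $\Theta$ by rows $(u^{(k)}_j\phi)_j$ whose multiplier norms are bounded by a constant $c$ (say $2$), using the Pick interpolation theorem on finite sets. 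Concretely, interpolate the values of $\Theta$ on a finite set $F_k$ increasing to a dense subset by a row of the form $U\phi$. A Pick interpolant with values in the ideal generated by $\phi$ should exist with norm at most $1+\varepsilon$, because the Pick matrix condition for $\Theta$ restricted to $[\phi]$ is inherited. This is the delicate part, and I would lean on the structure of Lemma 3.3 here.

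\textbf{Step 4 (conclusion).} Set $u_n=\sum_{j\le m_n}u^{(n)}_j\gamma_j$. Then $\|u_n\phi\|_{\MH}\le\|U^{(n)}\phi\|\,\|\Gamma\|\le c$, and $u_n\phi\to\psi$ pointwise by a diagonal choice as in the proof of Lemma \ref{lem:weak*}, using normal-family local uniform convergence. Lemma \ref{lem:weak*} then yields $\mathcal S(\phi)=[\phi]_*$.
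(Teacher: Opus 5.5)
\textbf{There is a genuine gap, in Step 3, which carries the entire content of the corollary.} Your Step 1 matches the paper's first step: weak$^\ast$ convergence in $\mathrm{Mult}(H)$ gives weak convergence in $H$, so $\mathcal S(\phi)\subseteq \mathrm{Mult}(H)\cap[\phi]$. Your handling of the second sentence of the corollary is also correct.

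The problem is what Step 3 asks for. After Step 2 you need rows $U^{(k)}\phi$ with $\|U^{(k)}\phi\|\le c$ converging pointwise to $\Theta$. Each $\theta_j$ lies in $\mathrm{Mult}(H)\cap[\phi]$, so this is the bounded-approximation property you are trying to prove. It is now demanded for every $\theta_j$ at once, with row-norm control. The reduction through McCullough--Trent and Leech factorization is therefore circular: it replaces $\psi$ by a harder object.

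The finite-set Pick interpolation does not supply this step. Pick's theorem controls the norm of an interpolant but cannot force the interpolant into the ideal $\mathrm{Mult}(H)\phi$. Suppose instead you solve for $U$ with $U=\Theta/\phi$ on $F_k$. Then Pick controls $\|U\|$, which typically blows up when $\phi$ is small near $\partial\Bd$, not $\|U\phi\|$. There is no ``inherited'' Pick-matrix condition for interpolants constrained to lie in $\mathrm{Mult}(H)\phi$.

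You have also misidentified Lemma 3.3 of Aleman--Hartz--McCarthy--Richter. In the form the paper uses, it is exactly the bounded approximation statement: every $\psi\in\mathrm{Mult}(H)\cap[\phi]$ is a pointwise limit of $u_n\phi$ with $\|u_n\phi\|_{\mathrm{Mult}(H)}$ controlled by $\|\psi\|_{\mathrm{Mult}(H)}$. So in the paper, Step 1 plus that lemma verifies Lemma \ref{lem:weak*} with $c=1$ immediately.

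\textbf{The missing idea} is to turn $H$-norm convergence into bounded pointwise convergence using the Smirnov-type factorization for normalized complete Pick spaces. The paper's appendix does exactly this.

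\begin{itemize}
\item From the weak convergence in Step 1, Mazur's lemma gives convex combinations $v_n$ of the $u_n$ with $\|v_n\phi-\psi\|_H\to 0$.
\item Factor $v_n\phi-\psi=a_n/(1-b_n)$ with $\|b_n\|_{\mathrm{Mult}(H)}\le 1$, $b_n(0)=0$, and $\|a_n\|_{\mathrm{Mult}(H)}\le \|v_n\phi-\psi\|_H\to 0$.
\item Set $r_n=1-\sqrt{\|a_n\|_{\mathrm{Mult}(H)}}$. The multiplier $w_n=\frac{1-b_n}{1-r_nb_n}v_n$ satisfies
\[
w_n\phi=\frac{1-b_n}{1-r_nb_n}\,\psi+\frac{a_n}{1-r_nb_n}.
\]
\item Von Neumann's inequality applied to $M_{b_n}$ gives
\[
\|w_n\phi\|_{\mathrm{Mult}(H)}\le \frac{2}{1+r_n}+\sqrt{\|a_n\|_{\mathrm{Mult}(H)}}\to 1.
\]
\item Schwarz's lemma ($|b_n(z)|\le|z|$) then gives $w_n\phi\to\psi$ pointwise.
\end{itemize}

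This verifies the criterion of Lemma \ref{lem:weak*} directly, with no generating row and no interpolation.
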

\begin{proof} Let $\phi\in \MH$, then $\phi\in H$ since $H$ has a normalized Pick kernel. Hence we define $[\phi]$ to be the closure in $H$ of $\MH \phi$. Since $weak^\ast$-convergent sequences in $\MH$ are weakly convergent in $H$, and $[\phi]$ is weakly closed, it follows that $\mathcal S(\phi)\subseteq \MH\cap [\phi]$. Thus, Lemma 3.3 of \cite{AlemanHartzMcCarthyRichter2023} implies that $\phi$ satisfies the condition of Lemma \ref{lem:weak*} with $c=1$.
\end{proof}
Notice that Lemma 3.3 of \cite{AlemanHartzMcCarthyRichter2023} also implies that if $H$ is a Pick space, then multipliers that are cyclic in $H$ must be $weak^\ast$ sequentially cyclic.
    
\section{Some preliminary results}
\begin{lem}\label{lem:*cyclic is cyclic} Let  $\phi \in \mathrm{Mult}(B^{N}_{\omega})$. If $1\in [\phi]_\ast$, then $\phi$ is cyclic in $B^{N}_{\omega}$.

Consequently  weak$^\ast$ sequentially cyclic multipliers are cyclic  in $B^{N}_{\omega}$.
\end{lem}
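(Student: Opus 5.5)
The plan is to show that the $B^N_\omega$-closure $[\phi]$ of $\mathrm{Mult}(B^N_\omega)\phi$ contains $1$. Then it contains all polynomials, because $[\phi]$ is invariant under multiplication by the coordinate functions (these are multipliers). Polynomials are dense in $B^N_\omega$, since the space is radially weighted and the Taylor series of any $f$ converges in norm. Hence $[\phi]=B^N_\omega$.

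First I record that $\mathrm{Mult}(B^N_\omega)\subseteq B^N_\omega$, because $1\in B^N_\omega$ and $\psi=\psi\cdot 1$. In particular $\phi\in B^N_\omega$, so $[\phi]$ is defined as a closed subspace of $B^N_\omega$. Next I consider
$$\mathcal{I}=\{\psi\in \mathrm{Mult}(B^N_\omega): \psi\in[\phi]\}=\mathrm{Mult}(B^N_\omega)\cap[\phi].$$
This set contains $\mathrm{Mult}(B^N_\omega)\phi$. The goal is to show that $\mathcal I$ is weak$^\ast$ closed in $\mathrm{Mult}(B^N_\omega)$; it then contains $[\phi]_\ast\ni 1$.

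For weak$^\ast$ closedness I use the Krein--Smulian theorem, exactly as before Lemma \ref{lem:weak*}. Since $\mathcal I$ is a convex subspace and the predual is separable, it suffices to check that $\mathcal I$ is weak$^\ast$ sequentially closed. So let $\psi_n\in\mathcal I$ with $\psi_n\to\psi$ weak$^\ast$. By the Brown--Shields characterization, $\|\psi_n\|_{\mathrm{Mult}}\le M$ and $\psi_n\to\psi$ pointwise. Then $\|\psi_n\|_{B^N_\omega}=\|M_{\psi_n}1\|\le M\|1\|$, so $(\psi_n)$ is bounded in $B^N_\omega$. Some subsequence converges weakly in $B^N_\omega$, and the weak limit agrees pointwise with $\psi$ because point evaluations are continuous, so the limit is $\psi$. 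Since $[\phi]$ is norm closed and convex, it is weakly closed, hence $\psi\in[\phi]$ and $\psi\in\mathcal I$. Equivalently, the map $\mathrm{Mult}\to B^N_\omega$, $\psi\mapsto \psi = M_\psi 1$, is weak$^\ast$-to-weak continuous on bounded sets, since $\langle M_\psi 1,g\rangle$ is given by a rank-one trace class functional. This makes the argument even more direct: that functional is weak$^\ast$ continuous on all of $\mathrm{Mult}$, so $\mathcal I$ is an intersection of kernels of weak$^\ast$ continuous functionals and is weak$^\ast$ closed without invoking Krein--Smulian.

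The consequence follows immediately: if $u_n\phi\to 1$ weak$^\ast$, then $1\in[\phi]_\ast$. The only points needing care are the density of polynomials in $B^N_\omega$ and the invariance of $[\phi]$ under multipliers; I do not expect a real obstacle. Density of polynomials follows from orthogonality of distinct monomials in $L^2(\omega)$, by rotation invariance of $\sigma$, and the fact that $R^N$ acts diagonally on homogeneous components. Hence the homogeneous expansion of $f$ converges in norm, including the point-mass-at-$1$ case via the $H^2(\partial\mathbb B_d)$ norm.
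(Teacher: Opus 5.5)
Your proof is correct and follows the paper's argument. In both, the key point is that $\psi\mapsto\langle M_\psi 1,g\rangle$ is a weak$^\ast$-continuous (rank-one trace class) functional on $\mathrm{Mult}(B^N_\omega)$, so weak$^\ast$ limits of multiplier multiples of $\phi$ stay in the weakly closed subspace $[\phi]$; the paper phrases this with a net, while your intersection-of-kernels version makes the Krein--Smulian detour unnecessary, and you also spell out the step from $1\in[\phi]$ to $[\phi]=B^N_\omega$ (multiplier invariance plus density of polynomials), which the paper leaves implicit.
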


\begin{proof}
Suppose  $1\in [\phi]_\ast$, then there is a net of multipliers $\{\psi_\gamma\}_\gamma$ such that $\psi_\gamma \phi \to 1$ in the $weak^\ast$ topology of $\mathrm{Mult}(B^{N}_{\omega})$. If $g\in B^N_\omega$, then $L(\psi)=\la \psi , g\ra_{B^N_\omega}= \la M_\psi 1, g\ra_{B^N_\omega}$ defines a $weak^\ast$-continuous linear functional on $\mathrm{Mult}(B^N_\omega)$. Hence 
$$\la \psi_\gamma \phi,g\ra_{B^N_\omega}=L(\psi_\gamma \phi)\to L(1)=\la 1,g\ra_{B^N_\omega},$$
i.e. $\psi_\gamma \phi \to 1$ weakly in $B^N_\omega$. We write $[\phi]$ for the smallest invariant subspace of $B^N_\omega$ that contains $\phi$. By standard Banach space theory, it agrees with the weak closure of the multiplier multiples of $\phi$, hence  $1\in [\phi]$, and we conclude that $\phi$ is cyclic in $B^N_\omega$.
\end{proof}

We know the following lemma is true, see \cite{BrownShields1984}, Proposition 11.
\begin{lem}\label{lem:cyclicProducts}
    If $f \in B^{N}_{\omega}$ and $g \in Mult(B^{N}_{\omega})$, then $gf$ is cyclic if and only if both $f$ and $g$ are cyclic.
\end{lem}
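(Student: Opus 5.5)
We prove the two implications separately, writing $[h]$ for the smallest closed $\mathrm{Mult}(B^N_\omega)$-invariant subspace of $B^N_\omega$ containing $h$, as in the proof of Lemma \ref{lem:*cyclic is cyclic}. We use two facts. First, $[h]$ is the norm closure of $\mathrm{Mult}(B^N_\omega)h$. Second, $1\in\mathrm{Mult}(B^N_\omega)$ and multipliers are dense in $B^N_\omega$, so $h$ is cyclic if and only if $1\in[h]$.

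\emph{If $gf$ is cyclic, then $f$ and $g$ are cyclic.} For any multiplier $u$ we have $u(gf)=(ug)f\in \mathrm{Mult}(B^N_\omega)f$. Hence $[gf]\subseteq[f]$, and so $[f]=B^N_\omega$. Likewise $u(gf)=M_g(uf)$, and $uf\in B^N_\omega$. Since $\mathrm{Mult}(B^N_\omega)$ is dense in $B^N_\omega$, we can approximate $f$ in norm by multipliers $v_k$. Then $uv_k g\in \mathrm{Mult}(B^N_\omega)g$ and $uv_kg\to ugf$ in norm, because $M_{ug}$ is bounded. Thus $\mathrm{Mult}(B^N_\omega)gf\subseteq[g]$, so $[gf]\subseteq[g]$ and $g$ is cyclic.

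\emph{If $f$ and $g$ are cyclic, then $gf$ is cyclic.} We aim to show $1\in[gf]$, in two steps.

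First we show $g\in[gf]$. Choose multipliers $u_n$ with $u_nf\to1$ in $B^N_\omega$; this is possible since $f$ is cyclic. Because $M_g$ is bounded, $u_n gf=M_g(u_nf)\to g$. Each $u_ngf$ lies in $[gf]$, hence $g\in[gf]$.

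Next, $[gf]$ is closed and invariant and contains $g$, so it contains $[g]$. Since $g$ is cyclic, $[g]=B^N_\omega$, and therefore $gf$ is cyclic.

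There is no real obstacle here. The only care needed is in the first direction, where one must justify $[gf]\subseteq[g]$ through density of multipliers in $B^N_\omega$, because $f$ itself need not be a multiplier. This density property of $B^N_\omega$ is the one stated in the Introduction.
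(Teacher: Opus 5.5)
Your proof is correct. The paper gives no argument of its own and only cites Brown--Shields (Proposition 11); your direct proof, via $[gf]\subseteq[f]\cap[g]$ and $g\in[gf]$, is the standard argument for that fact. You correctly identify density of multipliers (e.g. polynomials) in $B^N_\omega$ as the one point needing care. The only thing left implicit is that $g=M_g1\in B^N_\omega$, so that $[g]$ makes sense, and this is immediate since $1\in B^N_\omega$.
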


A similar  result holds for  weak$^\ast$ sequential cyclicity.

\begin{lem}
Let $f, g \in Mult(B^{N}_{\omega})$. Then $fg$ is weak$^\ast$ sequentially cyclic if and only if both $f$ and $g$ are weak$^\ast$ sequentially cyclic.
\end{lem}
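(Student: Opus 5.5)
The statement has two directions, and I would treat them separately, using throughout the characterization recalled in Section 3: a sequence in $\mathrm{Mult}(B^N_\omega)$ converges weak$^\ast$ if and only if it is norm bounded and converges pointwise on $\Bd$.

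\emph{The direction ``$fg$ weak$^\ast$ sequentially cyclic implies $f$ and $g$ are''} is immediate. Suppose $u_n\in \mathrm{Mult}(B^N_\omega)$ satisfy $u_n fg\to 1$ weak$^\ast$. Put $v_n=u_n g\in \mathrm{Mult}(B^N_\omega)$. Then $v_n f=u_n fg\to 1$ weak$^\ast$, so $f$ is weak$^\ast$ sequentially cyclic. The same argument with $v_n=u_nf$ handles $g$.

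\emph{The direction ``$f$ and $g$ weak$^\ast$ sequentially cyclic implies $fg$ is''} is the substantive one. Take sequences $u_n,v_m\in\mathrm{Mult}(B^N_\omega)$ with $u_nf\to 1$ and $v_mg\to1$ weak$^\ast$. Write
$$M_1=\sup_n\|u_nf\|_{\mathrm{Mult}}, \qquad M_2=\sup_m\|v_mg\|_{\mathrm{Mult}}.$$
The natural candidate is $w_{n}=u_nv_{m}$, since
$$w_n fg=(u_nf)(v_mg), \qquad \|w_nfg\|_{\mathrm{Mult}}\le M_1M_2$$
by submultiplicativity of the multiplier norm. Moreover $(u_nf)(v_mg)(z)\to 1$ pointwise if we let $n$ and $m$ tend to infinity together. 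So I would simply take $m=n$. Then $w_nfg$ is uniformly bounded by $M_1M_2$ and converges pointwise to $1\cdot 1=1$. By the Brown--Shields characterization (\cite{BrownShields1984}, Proposition 2), $w_nfg\to1$ weak$^\ast$.

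I see no genuine obstacle. The main point is that one should not try to multiply weak$^\ast$ limits abstractly, since multiplication is not jointly weak$^\ast$ continuous. Instead one uses the concrete description of weak$^\ast$ convergence of sequences, where products of bounded pointwise convergent sequences are again bounded and pointwise convergent. This also explains why the proof works for sequences but would be delicate for nets, where uniform boundedness is not available; that is exactly the gap behind Question \ref{ques:weak*}.
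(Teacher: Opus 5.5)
Your proof is correct and is essentially the paper's argument. For the forward direction you absorb the other factor into the multiplier sequence; for the converse you multiply the two approximating sequences with a common index, bound the products by $M_1M_2$ via submultiplicativity, and upgrade pointwise convergence to weak$^\ast$ convergence using the Brown--Shields characterization. Your closing aside about nets is not needed and is slightly off: the ideal version ($1\in[f]_\ast$ and $1\in[g]_\ast$ imply $1\in[fg]_\ast$) already follows from separate weak$^\ast$ continuity of multiplication, and Question~\ref{ques:weak*} is really about whether the weak$^\ast$ sequential closure $\mathcal S(\phi)$ is itself sequentially closed.
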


\begin{proof}
First, assume that $fg$ is weak$^\ast$ sequentially cyclic. Then there exists a sequence $\{p_n\} \subseteq  Mult(B^{N}_{\omega})$ such that
\[
p_n (fg) \to 1 \quad \text{$weak^\ast$}.
\]
This implies $(p_n f) g \to 1$ $weak^\ast$. By hypothesis, $p_n f \in Mult(B^{N}_{\omega})$, so $g$ is weak$^\ast$ sequentially cyclic.  

Also, by symmetry, $f $ is weak$^\ast$ sequentially cyclic as well.  

Conversely, assume that $f$ and $g$ are both weak$^\ast$ sequentially cyclic in $B^{N}_{\omega}$. Then there exist sequences $(p_k), (q_k) \subseteq  Mult(B^{N}_{\omega})$ such that
\[
p_k f \to 1 \quad \text{$weak^\ast$}, \quad q_k g \to 1 \quad \text{$weak^\ast$}.
\]
There exist constants $C, C' > 0$ such that
\[
\|p_k f\|_{Mult(B^{N}_{\omega})} \leq C, \quad \|q_k g\|_{Mult(B^{N}_{\omega})} \leq C'.
\]
Hence,
\[
(q_k p_k) (fg) \to 1 \quad \text{pointwise}, \quad \|(q_k p_k) (fg)\|_{Mult(B^{N}_{\omega})} \leq M,
\]
for some $M = CC' > 0$. Therefore, $fg$ is weak$^\ast$ sequentially cyclic.
\end{proof}

Note that Lemma 4.1 implies that $N^{+}_{*} (B^{N}_{\omega})$ is an algebra. We also obtain the following corollaries.

\begin{cor}\label{cor:u^n}
If $u \in Mult(B^{N}_{\omega})$ is weak$^\ast$ sequentially cyclic, then for any $n \in \mathbb{N}$, $u^n$ is weak$^\ast$ sequentially cyclic.
\end{cor}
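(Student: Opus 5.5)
The plan is to argue by induction on $n$, using the product lemma just proved (Lemma 4.1): for $f,g\in \mathrm{Mult}(B^N_\omega)$, the product $fg$ is weak$^\ast$ sequentially cyclic if and only if both $f$ and $g$ are.

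\emph{Base case.} For $n=1$ there is nothing to prove, since $u^1=u$ is weak$^\ast$ sequentially cyclic by hypothesis.

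\emph{Inductive step.} Suppose $u^n$ is weak$^\ast$ sequentially cyclic. Since $\mathrm{Mult}(B^N_\omega)$ is an algebra, $u^n$ is again a multiplier, so Lemma 4.1 applies with $f=u^n$ and $g=u$. Both factors are weak$^\ast$ sequentially cyclic, hence so is $u^{n+1}=u^n u$.

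For a direct construction, pick $p_k\in\mathrm{Mult}(B^N_\omega)$ with $p_k u\to 1$ weak$^\ast$. Then $\sup_k\|p_k u\|_{\mathrm{Mult}}\le C$ and $p_k u\to 1$ pointwise on $\Bd$. Consequently $(p_k)^n u^n=(p_k u)^n$ has multiplier norm at most $C^n$, because the multiplier norm is submultiplicative. It also converges pointwise to $1$. By the Brown--Shields characterization of weak$^\ast$ convergent sequences, $p_k^n u^n\to 1$ weak$^\ast$.

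No step here presents a real obstacle. The only points to check are that powers of multipliers remain multipliers and that the norm bounds multiply.
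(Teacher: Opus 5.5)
Your proposal is correct and matches the paper, which states this corollary as an immediate consequence of the product lemma (weak$^\ast$ sequential cyclicity of $fg$ is equivalent to that of $f$ and $g$). Your induction on $n$ is exactly that deduction, and your direct construction with $p_k^n u^n=(p_k u)^n$, norm bound $C^n$ and pointwise convergence to $1$ is the same argument the paper uses to prove the product lemma.
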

\begin{cor}
If $\phi \in Mult(B^{N}_{\omega})$, then  $\phi$ is weak$^\ast$ sequentially cyclic, if and only if $1/\phi\in N^+_\ast(B^N_\omega)$.
\end{cor}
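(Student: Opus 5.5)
Both directions come straight from the definitions of weak$^\ast$ sequential cyclicity and of $N^+_\ast(B^N_\omega)$. No machinery is needed beyond the Brown--Shields characterization of weak$^\ast$ convergent sequences, namely norm boundedness together with pointwise convergence.

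\emph{Forward direction.} Assume $\phi$ is weak$^\ast$ sequentially cyclic. Then $1/\phi$ is the quotient $u/v$ with $u=1\in \mathrm{Mult}(B^N_\omega)$ and $v=\phi$. Here $v$ is weak$^\ast$ sequentially cyclic by assumption, so $1/\phi\in N^+_\ast(B^N_\omega)$ by the definition. One should also note that $\phi$ has no zeros, since $u_n\phi\to 1$ pointwise, so $1/\phi$ is a genuine holomorphic function.

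\emph{Converse.} Assume $1/\phi = u/v$ with $u,v\in\mathrm{Mult}(B^N_\omega)$ and $v$ weak$^\ast$ sequentially cyclic. Since $v$ is zero-free (it is cyclic, or directly from the pointwise convergence), we get the identity $u\phi = v$ on $\Bd$. Choose $p_n\in \mathrm{Mult}(B^N_\omega)$ with $p_n v\to 1$ weak$^\ast$. Setting $w_n = p_n u\in\mathrm{Mult}(B^N_\omega)$ gives $w_n\phi = p_n v \to 1$ weak$^\ast$, so $\phi$ is weak$^\ast$ sequentially cyclic. This is exactly the first half of the product lemma (Lemma~4.1) applied to the factorization $v=u\phi$.

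The only point requiring care is to make sure the quotient representation really yields the identity $u\phi=v$ of multipliers. This needs $v$ zero-free, so that $1/\phi=u/v$ holds as functions and one may cross-multiply. That is immediate from $p_nv\to1$ pointwise. Otherwise the argument is routine and I expect no genuine obstacle.
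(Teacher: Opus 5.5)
Your proposal is correct and follows essentially the same argument as the paper: the forward direction is the trivial representation $1/\phi = 1/\phi$, and the converse cross-multiplies $1/\phi = u/v$ to get $v = u\phi$ and then uses the first half of the product lemma. Your remark that $v$ (and $\phi$) must be zero-free to justify the cross-multiplication is a detail the paper leaves implicit.
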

\begin{proof} Suppose $\frac{1}{\phi}=\frac{u}{v}$ for some multipliers $u$ and $v$, where $v$ is weak$^\ast$ sequentially cyclicity.. Then $v=u\phi$, hence $\phi$ must be weak$^\ast$ sequentially cyclic. The other direction is trivial.\end{proof}
\section{The comparison principle}
 \begin{thm}\label{thm:f less than g}
If $f, g \in \mathrm{Mult}(B^{N}_{\omega})$ satisfy $|f| \leq |g|$, and if $f$ is weak$^\ast$ sequentially cyclic, then $g$ is weak$^\ast$ sequentially cyclic.
\end{thm}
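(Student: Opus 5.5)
The plan is to show that $g$ divides, inside $\mathrm{Mult}(B^N_\omega)$, a family of multipliers that are uniformly bounded and converge pointwise to $f^n$ times a factor tending to $1$. Since $f$ is weak$^\ast$ sequentially cyclic it has no zeros in $\Bd$, and then $|f|\le |g|$ forces $g$ to be zero-free too. Hence $h=f/g$ is holomorphic on $\Bd$ with $|h|\le 1$. By Corollary \ref{cor:u^n}, $f^n$ is weak$^\ast$ sequentially cyclic for every $n$. So it suffices to find one fixed $n$ and a family $E_t\in\mathrm{Mult}(B^N_\omega)$ with the following three properties:
\begin{itemize}
\item[(i)] $\sup_t\|E_t\|_{\mathrm{Mult}(B^N_\omega)}<\infty$;
\item[(ii)] $E_t\to 1$ pointwise on $\Bd$;
\item[(iii)] $w_t:=f^nE_t/g\in\mathrm{Mult}(B^N_\omega)$ for each $t$, with no bound on $\|w_t\|$ required.
\end{itemize}
Granting this, choose $p_k$ with $p_kf^n\to1$ weak$^\ast$ and $\|p_kf^n\|\le C$. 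Then $(p_kw_t)\,g=(p_kf^n)E_t$ has norm at most $C\sup_t\|E_t\|$ and converges pointwise to $E_t$ as $k\to\infty$. The diagonal argument from the proof of Lemma \ref{lem:weak*} applies: use normality of bounded sets in $\mathrm{Mult}(B^N_\omega)$, pick $k=k_t$ with uniform closeness on $r_t\Bd$, and let $t\to\infty$. This gives $(p_{k_t}w_t)g\to 1$ weak$^\ast$, so $g$ is weak$^\ast$ sequentially cyclic.

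To construct $E_t$ I will use Theorem \ref{thm:Iterated_thm4.2}, which is the tool for manufacturing multipliers with controlled norms out of quotients with nonnegative real part. The geometric input is that $|h|\le1$ gives $\operatorname{Re}(1\pm h)\ge 0$ and $\operatorname{Re}\frac{1}{1+h}\ge \tfrac12$. For instance, take $\psi=g+f$ and $\phi=s\,g$ with $s>0$. Then $\operatorname{Re}(\phi/\psi)=s\operatorname{Re}\frac{1}{1+h}\ge0$, so $(g+f)^ne^{-s g/(g+f)}$ is a multiplier with norm $\le C(1+s)^n(\|f\|+\|g\|)^n$. Similarly, $\psi=g$, $\phi=\varepsilon(g-f)$ gives $g^ne^{-\varepsilon(1-h)}$ with norm bounded uniformly in $\varepsilon\in(0,1]$, and this tends to $g^n$ pointwise as $\varepsilon\to0$.

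The first thing I would try is to combine such exponential multipliers with powers of $f$ so that the quotient by $g$ is again covered by Theorem \ref{thm:Iterated_thm4.2}. Concretely, I would try to write $f^nE_t/g$ as $\Psi^m e^{-\Phi/\Psi}$ with $\Psi,\Phi$ polynomial expressions in $f,g$ and $\operatorname{Re}(\Phi/\Psi)\ge0$. Here the only use of the inequality $|f|\le|g|$ is that $f/g$ lies in the closed unit disc, so that $\log$-type and Möbius-type expressions in $h$ land in a half plane.

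The main obstacle is this step (iii): getting an honest multiplier after dividing by $g$, rather than merely a bounded holomorphic function. If the direct exponential representation fails, the fallback is Theorem \ref{thm:Iterated_Lem4.1}. It converts boundedness of $\phi^j\log\phi$ into multiplier membership of $\phi^n\log\phi$, and would be applied to $\phi=1+h$ or $\phi=(g+f)/2g$, which have bounded logarithm-type behaviour. One would then exponentiate through Theorem \ref{thm:Iterated_thm4.2}, absorbing the needed extra powers of $f$ into the fixed $n$.
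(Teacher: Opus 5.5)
Your reduction is correct: granting (i)--(iii), the diagonal argument does give $(p_{k_t}w_t)g\to 1$ weak$^\ast$. The gap is that (iii) is never proved, and it carries the entire content of the theorem. Indeed, (iii) with $E_t\equiv 1$ says exactly that $f^n/g\in\mathrm{Mult}(B^N_\omega)$ for some $n$. This division property is what the paper imports as Theorem 3.2 of \cite{AlemanPerfektRichterSundbergSunkes2023}. Once you have it, the proof is two lines: from $\phi_kf\to1$ weak$^\ast$ you get $(\phi_k^nf^n/g)\,g=(\phi_kf)^n\to 1$ weak$^\ast$, with no family $E_t$ and no diagonalization.

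Neither of your suggested routes to (iii) works as stated.
\begin{itemize}
\item For the exponential representation you exhibit no $\Psi,\Phi$. The natural attempt needs $-n\log h$ to be a quotient of multipliers, since $f^n/g=g^{n-1}e^{n\log h}$, and nothing in the hypotheses provides that.
\item The fallback via Theorem \ref{thm:Iterated_Lem4.1} is circular. That theorem requires its input to be a multiplier already, and showing that $1+h=(g+f)/g$ or $(g+f)/(2g)$ is a multiplier is the same division problem.
\end{itemize}

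The missing idea can in fact be extracted from Theorem \ref{thm:Iterated_thm4.2}. The trick is to vary a complex parameter and take Taylor coefficients, rather than to look for a single exponential representation.
\begin{itemize}
\item For $s\in\mathbb{C}$ with $|s|\le 1$, put $\psi=g$ and $\phi=g+sf$. Then $\operatorname{Re}(\phi/\psi)=1+\operatorname{Re}(sh)\ge 0$.
\item Hence $F(s)=g^ne^{-sh}$ is a multiplier with $\|F(s)\|_{\mathrm{Mult}(B^N_\omega)}\le K:=eC(2\|g\|+\|f\|)^n$, uniformly in $s$.
\item Pointwise on $\Bd$ you have
\[ g^nh^k=\frac{(-1)^k k!}{2\pi i}\oint_{|s|=1}\frac{F(s)}{s^{k+1}}\,ds . \]
\item The Riemann sums of this integral are multipliers of norm at most $k!\,K$, and they converge pointwise. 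By weak$^\ast$ compactness of balls in $\mathrm{Mult}(B^N_\omega)$, the limit is a multiplier.
\item Taking $k=n+1$ gives $f^{n+1}/g=g^nh^{n+1}\in\mathrm{Mult}(B^N_\omega)$.
\end{itemize}
This is your (iii) with $E_t\equiv 1$ and exponent $n+1$. With it, or with the cited division theorem, your argument closes.
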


\begin{proof}
Since $f, g \in \mathrm{Mult}(B^{N}_{\omega})$ with $|f| \leq |g|$, by Theorem 3.2 in \cite{AlemanPerfektRichterSundbergSunkes2023} there exists $n \in \mathbb{N}$ such that
\[
\frac{f^{n}}{g} \in \mathrm{Mult}(B^{N}_{\omega}).
\]

Because $f$ is weak$^\ast$ sequentially cyclic, there exists a sequence $\{\phi_{k}\} \subseteq  \mathrm{Mult}(B^{N}_{\omega})$ such that
\[
\phi_{k} f \to 1
\quad \text{weak}^\ast.
\]
Hence,
\[
(\phi_{k} f)^{n} \to 1
\quad \text{weak}^\ast.
\]
This implies that
\[
\left( \phi_{k}^{\,n} \frac{f^{n}}{g} \right) g
=
\phi_{k}^{\,n} f^{n}
\to 1
\quad \text{weak}^\ast.
\]
Observe that
\[
\phi_{k}^{\,n} \frac{f^{n}}{g}
\in \mathrm{Mult}(B^{N}_{\omega}).
\]
Therefore, $g$ is weak$^\ast$ sequentially cyclic.
\end{proof}

\section{Theorems for \texorpdfstring{$\log f$}{log f}}
We are interested in cyclic functions, thus we will always want to assume that $\phi(z)\ne 0$ in $\Bd$. Then $\phi$ has an analytic logarithm in $\Bd$ and saying that  $\phi$ has bounded argument is equivalent to saying that $g=\log \phi$ has bounded imaginary part. In \cite{Richteriteratedlog} it was shown that polynomials without zeros in the ball always have bounded argument. In single variable we know that functions with bounded real part satisfy an inequality of the type $|g(z)|\le c\log\frac{1+|z|}{1-|z|}$ (See \cite{Rudinbookrealandcomplex}, Page 294 - ex:9 and 10). By considering slice functions, the same conclusion holds for functions in $\Bd$ and we conclude that if $\phi$ has bounded argument, then there is $c>0$ such that  $|\phi(z)|\ge A {(1-|z|)^c}$ for all $z\in \Bd$.

We start with the following observation.
\begin{thm}\label{thm:log=ratio}
    Let $\phi \in \mathrm{Mult}(B^{N}_{\omega})$ have no zeros in  $\mathbb{B}_d$ and bounded argument.
    Then there are $ \psi \in \Mult(B^N_\omega)$ and $n\in \N$ such that $\log \phi= \frac{\psi}{\phi^n}$. 
    
    Consequently, if $\phi$ is $weak^\ast$-sequentially cyclic, then $\log \phi \in N^+_\ast(B^N_\omega)$.
\end{thm}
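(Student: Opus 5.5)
The plan is to obtain the statement from Theorem \ref{thm:Iterated_Lem4.1}, applied with a suitable exponent $j$. So the first task is to find a $j$ with $\phi^j\log\phi\in H^\infty$.

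Since $\phi$ is a multiplier, it is bounded, say $|\phi|\le M$. Since $\phi$ has bounded argument, write $\log\phi = \log|\phi| + i\arg\phi$ with $|\arg \phi|\le K$ on $\Bd$. Then
\[
|\phi^j\log\phi| \le |\phi|^j\bigl(|\log|\phi|| + K\bigr).
\]
For $j\ge 1$ the function $t\mapsto t^j(|\log t|+K)$ is bounded on $(0,M]$, because $t^j\log t\to 0$ as $t\to 0^+$. Hence $\phi\log\phi\in H^\infty$. Note that the lower bound $|\phi(z)|\ge A(1-|z|)^c$ recorded before the theorem is not even needed here; only the boundedness of $\phi$ and of its argument are used. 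Theorem \ref{thm:Iterated_Lem4.1} with $j=1$ then gives an integer $n\ge 1$ such that $\psi:=\phi^n\log\phi\in\Mult(B^N_\omega)$. Since $\phi$ has no zeros, we can divide to get $\log\phi=\psi/\phi^n$, which is the first assertion.

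For the consequence, assume $\phi$ is weak$^\ast$ sequentially cyclic. By Corollary \ref{cor:u^n}, $\phi^n$ is also weak$^\ast$ sequentially cyclic. Then $\log\phi=\psi/\phi^n$ has exactly the form required in the definition of $N^+_\ast(B^N_\omega)$, with $u=\psi$ and $v=\phi^n$. So $\log\phi\in N^+_\ast(B^N_\omega)$.

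The only step requiring any care is checking the $H^\infty$ hypothesis of Theorem \ref{thm:Iterated_Lem4.1}. In particular, the imaginary part of $\log\phi$ must be bounded, which is precisely the bounded argument assumption. Everything else is direct citation.
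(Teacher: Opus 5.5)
Your proposal is correct and follows the paper's proof essentially verbatim: the paper also bounds $|\phi\log\phi|$ using only the boundedness of $\phi$ and of its argument to get $\phi\log\phi\in H^\infty$, then applies Theorem~\ref{thm:Iterated_Lem4.1} with $j=1$ and divides by $\phi^n$. For the consequence, it likewise uses Corollary~\ref{cor:u^n} to see that $\phi^n$ is weak$^\ast$ sequentially cyclic.
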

\begin{proof}
    Since multipliers are bounded, there  exists a constant $C > 0$ such that
\[
|\phi(z) \log |\phi(z)|| \leq C \quad \text{for all } z \in B^{N}_{\omega}.
\]
By hypothesis, $\phi$ has bounded argument, so there exists $A > 0$ with
\[
|\Im (\log \phi(z))| \leq A \quad \text{for all } z \in B^{N}_{\omega}.
\]
Thus,
\[
|\phi(z) \log \phi(z)| \leq |\phi(z)| \log|\phi(z)| + |\phi(z)| |\arg \phi(z)| \leq C + A \|\phi\|_\infty < C',
\]
for some constant $C' > 0$. This implies $\phi \log \phi \in H^\infty$.  

Hence by Theorem \ref{thm:Iterated_Lem4.1}, there exists $n \in \mathbb{N}$ such that 
\[
\psi:= \phi^n \log \phi \in Mult(B^{N}_{\omega}).
\]
Thus, $\log \phi= \frac{\psi}{\phi^n}$.
Now assume that $\phi$ is $weak^\ast$-sequentially cyclic, then by Corollary \ref{cor:u^n} the function $\phi^n$ is $weak^\ast$-sequentially cyclic, hence  $\log \phi\in N^+_\ast(B^N_\omega)$.
\end{proof}

\begin{lem}\label{lem:weak*log}
Let $f \in B^{N}_{\omega} \cap H^{\infty}$ and assume that there are  $u,v\in \mathrm{Mult}(B^N_\omega)$ such that
\begin{enumerate}
\item $\|f\|_\infty\le 1$,
\item $f=e^{-\frac{u}{v}}$,
\item $f$ has bounded argument, and
\item $v$ is $weak^\ast$ sequentially cyclic.
\end{enumerate}
 Then there is $n\in\N$ such that $v^n f\in \mathrm{Mult}(B^N_\omega)$ and
\[
 [v^n f]_{*} = Mult(B^{N}_{\omega}).
\]
\end{lem}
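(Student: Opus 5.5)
The plan is to produce explicit multipliers $g_k$ such that $g_k\,v^nf$ is bounded in $\mathrm{Mult}(B^N_\omega)$ and converges pointwise to a power of $v$. Theorem \ref{thm:Iterated_thm4.2} will be the only analytic input.

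\textbf{Setup.} Write $q=u/v$, so $f=e^{-q}$. From $\|f\|_\infty\le 1$ we get $\operatorname{Re} q\ge 0$. From the bounded argument hypothesis, $|\operatorname{Im} q|\le A$ for some $A>0$. Let $C$ and $n$ be the constants of Theorem \ref{thm:Iterated_thm4.2}; we use this single $n$ throughout. Applying that theorem with $\phi=u$, $\psi=v$ gives $v^nf=v^ne^{-u/v}\in\mathrm{Mult}(B^N_\omega)$, which is the first claim. Let $I=[v^nf]_*$.

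\textbf{Construction.} For $k\in\mathbb N$ put
\[
q_k=\frac{kq}{k+q}=k-\frac{kv}{v+u/k},\qquad r_k=q-q_k=\frac{q^2}{k+q}=\frac{u^2/k}{v(v+u/k)} .
\]
\begin{enumerate}
\item Since $\operatorname{Re}\bigl(kv/(v+u/k)\bigr)=\operatorname{Re}\bigl(k^2/(k+q)\bigr)\ge 0$, Theorem \ref{thm:Iterated_thm4.2} with $\phi=kv$, $\psi=v+u/k$ shows that
\[
g_k:=e^k\,(v+u/k)^n e^{-kv/(v+u/k)}=(v+u/k)^ne^{q_k}
\]
is a multiplier. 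Its norm may grow with $k$, which is harmless.
\item The product is $g_k\,v^nf=v^n(v+u/k)^ne^{-r_k}$.
\item Writing $q=x+iy$ with $x\ge 0$ and $|y|\le A$, a direct computation gives
\[
\operatorname{Re} r_k=\frac{k(x^2-y^2)+x|q|^2}{|k+q|^2}\ge -\frac{A^2}{k}.
\]
\end{enumerate}

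\textbf{Uniform bound.} Apply Theorem \ref{thm:Iterated_thm4.2} with
\[
\psi_k=v(v+u/k),\qquad \phi_k=\frac{u^2}{k}+\frac{A^2}{k}\,\psi_k,
\]
so that $\phi_k/\psi_k=r_k+A^2/k$ has nonnegative real part. This yields
\[
\|\psi_k^ne^{-r_k}\|_{\mathrm{Mult}}\le e^{nA^2/k}\,C\bigl(\|\psi_k\|+\|\phi_k\|\bigr)^n .
\]
Since $\|\psi_k\|\le\|v\|(\|v\|+\|u\|)$ and $\|\phi_k\|\le \|u\|^2+A^2\|\psi_k\|$ for $k\ge1$, this bound is uniform in $k$. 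Moreover $\psi_k^ne^{-r_k}=g_k\,v^nf\in\mathrm{Mult}(B^N_\omega)\,v^nf$.

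\textbf{Convergence and conclusion.} Pointwise in $\mathbb B_d$ we have $v+u/k\to v$, and $r_k=q^2/(k+q)\to 0$ because $q(z)$ is finite. Hence $g_kv^nf\to v^{2n}$ pointwise with uniformly bounded multiplier norms. By the Brown–Shields criterion this convergence is weak$^\ast$, so $v^{2n}\in I$. By Corollary \ref{cor:u^n}, $v^{2n}$ is weak$^\ast$ sequentially cyclic: there are $p_j$ with $p_jv^{2n}\to1$ weak$^\ast$. Since $I$ is a weak$^\ast$-closed ideal, each $p_jv^{2n}\in I$, hence $1\in I$ and $I=\mathrm{Mult}(B^N_\omega)$.

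\textbf{Main obstacle.} The delicate step is choosing the regularization $q_k$ correctly. It must satisfy three requirements at once:
\begin{enumerate}
\item $e^{q_k}$ is dominated by a multiplier, i.e.\ $\operatorname{Re} q_k$ is bounded above;
\item the remainder $r_k=q-q_k$ tends to $0$ pointwise;
\item $\operatorname{Re} r_k$ is bounded below uniformly.
\end{enumerate}
The bounded-argument hypothesis is exactly what gives the third requirement, via $\operatorname{Re} r_k\ge -A^2/k$. It must also be checked that the multiplier norms of $\phi_k,\psi_k$ stay bounded, which works because the $1/k$ scaling keeps them bounded while $r_k\to0$.
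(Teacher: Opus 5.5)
Your proof is correct, and it takes a genuinely different route from the paper's.

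The paper iterates over fractional powers of $f$. It uses approximate inverses $\phi_j$ of $v$ with $\|\phi_j v\|_{\mathrm{Mult}(B^N_\omega)}\le M$ and multiplies $v^nf^\alpha$ by the multipliers $e^{\alpha\gamma\phi_j u}$, $\gamma=1/M$. This gives $v^nf^{\alpha(1-\gamma)}\in[v^nf^\alpha]_*$, with bounded argument entering through $\Re\bigl(\frac uv(1-\gamma\phi_jv)\bigr)\ge -A$. It then chains these inclusions along $\alpha_k=(1-\gamma)^k$, using that $[\,\cdot\,]_*$ is an ideal. Finally it obtains $v^n\in[v^nf]_*$ from the $\alpha$-uniform bound on $\|v^nf^\alpha\|_{\mathrm{Mult}(B^N_\omega)}$.

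You replace the whole iteration by a single regularization $q_k=kq/(k+q)$. Theorem \ref{thm:Iterated_thm4.2} certifies that $(v+u/k)^ne^{q_k}$ is a multiplier without any cyclicity input. Bounded argument is used only to get $\Re r_k\ge -A^2/k$, which yields the uniform bound.

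Your route buys two things. First, it gives the sequential statement $v^{2n}\in\mathcal S(v^nf)$: $v^{2n}$ is a bounded pointwise limit of multiplier multiples of $v^nf$. The paper reaches $v^n\in[v^nf]_*$ only through a chain of weak$^\ast$ closures, which cannot obviously be collapsed into one bounded sequence; this is exactly the difficulty recorded in Question \ref{ques:weak*}. Second, you use the weak$^\ast$ sequential cyclicity of $v$ only in the last step. So your argument proves the lemma under the weaker hypothesis $1\in[v]_*$. That hypothesis already forces $v$ to be zero-free, since point evaluations are weak$^\ast$ continuous. It also gives $1\in[v^{2n}]_*$, because multiplication by $v$ is weak$^\ast$ continuous and hence $[v]_*\subseteq[v^{2}]_*\subseteq\cdots$.

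The paper's route, in exchange, only multiplies by exponentials of multipliers. These are multipliers for elementary Banach-algebra reasons, and it invokes Theorem \ref{thm:Iterated_thm4.2} solely for norm bounds, whereas you also need it to certify that $g_k$ is a multiplier.

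Two small points. The factor in your norm estimate should be $e^{A^2/k}$, not $e^{nA^2/k}$; this is harmless, since yours is larger. You should also say explicitly that $v$ has no zeros, because any sequence $w_jv\to1$ pointwise forces $v(z)\ne0$. Then $q$ is holomorphic, and $v+u/k=v(k+q)/k$ and $\psi_k$ are zero-free, which is what makes each of your applications of Theorem \ref{thm:Iterated_thm4.2} legitimate.
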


\begin{proof} Fix $0\le \alpha \le 1$. Since $\|f\|_{\infty} \leq 1$, we have 
\[
\Re\!\left(\alpha \frac{u}{v}\right) \geq 0.
\]

By Theorem 2.1, there exists $n \in \mathbb{N}$ and $C>0$ such that 
$ v^{n} f^\alpha \in \mathrm{Mult}(B^{N}_{\omega})$ with 
\begin{align}\label{equ:v^nf}
 \|v^n f^\alpha\|_{\mathrm{Mult}(B^{N}_{\omega})}&\le C(\|\alpha u\|_{\mathrm{Mult}(B^{N}_{\omega})}+\|v\|_{\mathrm{Mult}(B^{N}_{\omega})})^n \nonumber \\
 &\le C(\| u\|_{\mathrm{Mult}(B^{N}_{\omega})}+\|v\|_{\mathrm{Mult}(B^{N}_{\omega})})^n
\end{align}

Since $v$ is weak$^\ast$ sequentially cyclic, there exists a sequence $\{\phi_{j}\} \subseteq  \mathrm{Mult}(B^{N}_{\omega})$ such that
\[
\phi_{j} v \to 1 \quad \text{pointwise}
\]
and there is $M>0$ such that
\[
\|\phi_{j} v\|_{\mathrm{Mult}(B^{N}_{\omega})} \leq M
\]
 for all $ j \in \mathbb{N}$. Moreover, because $f$ has bounded argument, there exists $A > 0$ such that
\[
|\Im \frac{u}{v}|=|\Im \log f| \leq A.
\]

Note that
\[
\|\phi_{j} v\|_{\infty}
\leq
\|\phi_{j} v\|_{\mathrm{Mult}(B^{N}_{\omega})}
\leq M.
\]
Since $\phi_jv(z)\to 1$ we must have $M\ge 1$. 
Set
\[
\gamma = \frac{1}{M}>0.
\]

\noindent{\underline{Claim:}}  $v^n f^{\alpha(1-\gamma)}\in [v^n f^\alpha]_\ast$.

\

Note that $\psi_j= e^{\alpha \gamma \phi_{j} u} \in \mathrm{Mult}(B^{N}_{\omega})$. We will establish the claim by showing that $\psi_j v^n f^\alpha \to v^n f^{\alpha(1-\gamma)}$ in the $weak^\ast$ topology of $\mathrm{Mult}(B^{N}_{\omega})$. 

Since $$\psi_jf^\alpha= e^{-\alpha \frac{u}{v}(1-\gamma \phi_j v)}$$ the pointwise convergence follows from the pointwise convergence of $\phi_jv\to 1$. We have to prove that $\|\psi_jv^nf^\alpha\|_{\mathrm{Mult}(B^{N}_{\omega})}$ is bounded independently of $j$.

For all $j\in \N$ we have
$$\Re(1 - \gamma \phi_{j} v(z)) \geq 0 \ \text{ and } \ \
|\Im(1 - \gamma \phi_{j} v(z))| \leq 1.$$

This implies
\begin{align*}
\Re\!\left( \frac{u}{v} (1 - \gamma \phi_{j} v) \right)
&=
\Re\!\left(  \frac{u}{v} \right)
\Re(1 - \gamma \phi_{j} v)
-
\Im\!\left( \frac{u}{v} \right)
\Im(1 - \gamma \phi_{j} v) \\
&\geq - \,  A.
\end{align*}
Hence 
\[
\Re\!\left(
\alpha \frac{u}{v} (1 - \gamma \phi_{j} v)
+
\alpha \frac{ A v}{v}
\right)
\geq 0.
\]

Now apply Theorem 2.1 again, this time with
\[
\phi = \alpha u (1 - \gamma \phi_{j} v) + \alpha  A v
\text{ and }
\psi = v.
\]
Note that the $n$ and $C$ in Theorem \ref{thm:Iterated_thm4.2} do not depend on $\phi$, $\psi$, or $\alpha$. Thus

\begin{align*} \|\psi_jf^\alpha\|_{\mathrm{Mult}(B^{N}_{\omega})}
&=e^{\alpha A} \left\| v^{n}
e^{-\frac{\alpha u (1 - \gamma \phi_{j} v) + \alpha  A v}{v}}
\right\|_{\mathrm{Mult}(B^{N}_{\omega})}\\  
&\leq C e^{\alpha A} 
\left(\|v\|_{\mathrm{Mult}(B^{N}_{\omega})}
+ \|\alpha u (1 - \gamma \phi_{j} v) + \alpha  A v\|_{\mathrm{Mult}(B^{N}_{\omega})}
\right)^{n} \\   
&\leq C e^{\alpha A} \left(
\|v\|_{\mathrm{Mult}(B^{N}_{\omega})}(1 +  A) +
(1 + \gamma M)\|u\|_{\mathrm{Mult}(B^{N}_{\omega})}
\right)^{n} <\infty.
\end{align*}
This establishes the Claim.

For $k=0,1, \dots$  let $\alpha_{k} = (1 - \gamma)^{k}.$ Then the Claim implies that for each $k$ we have $v^n f^{\alpha_{k+1}}\in [v^n f^{\alpha_k}]_\ast$.
Since for each $\phi\in \mathrm{Mult}(B^N_\omega)$ the subspace $[\phi]_\ast$ is multiplier invariant, we obtain inductively that $[v^n f^{\alpha_k}]_\ast\subseteq [v^nf]_\ast$ for all $k$. But $\alpha_k\to 0$ and $\|v^n f^{\alpha_k}\|_{\mathrm{Mult}(B^{N}_{\omega})}$ is bounded independently of $k$ by inequality (\ref{equ:v^nf}). Thus, $v^n f^{\alpha_k} \to v^n$ $weak^\ast$, hence $[v^n]_\ast \subseteq [v^nf]_\ast$. But now Corollary \ref{cor:u^n} implies that $[v^n]_\ast = \mathrm{Mult}(B^N_\omega)$ and this concludes the proof of the Lemma.
\end{proof}

    \begin{thm}\label{thm:logN+*} Let $f \in B^{N}_{\omega} \cap H^{\infty}$ and assume that $f$ has no zeros  inside $\mathbb{B}_d$ and has bounded argument. If
\[
\log f \in N^{+}_{*}(B^{N}_{\omega}),
\]
then $f$ is cyclic in $ B^{N}_{\omega}$.\\
Moreover,
\[
f \in N^{+}_{*}(B^{N}_{\omega}) \quad \text{and} \quad 
\frac{1}{f} \in N^{+}(B^{N}_{\omega}).
\]
 \end{thm}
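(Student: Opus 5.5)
Write $\log f = \frac{u_0}{v}$ with $u_0, v \in \mathrm{Mult}(B^N_\omega)$ and $v$ weak$^\ast$ sequentially cyclic. The plan is to reduce to Lemma \ref{lem:weak*log}, then use the product lemmas and the fact that $B^N_\omega$ contains its multipliers as a dense subset.

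\emph{Normalization.} Since $f\in H^\infty$, replace $f$ by $f_1=f/\|f\|_\infty$. Then $\|f_1\|_\infty\le 1$, $f_1$ has bounded argument, and
$$\log f_1=\log f-\log\|f\|_\infty=\frac{u_0-(\log\|f\|_\infty)v}{v}.$$
Hence $f_1=e^{-u/v}$ with $u=-u_0+(\log\|f\|_\infty)v\in\mathrm{Mult}(B^N_\omega)$. Cyclicity and membership in $N^+_\ast$ or $N^+$ are unaffected by nonzero scalar multiples, so we may assume $f=f_1$. Lemma \ref{lem:weak*log} then gives $n$ with $v^nf\in\mathrm{Mult}(B^N_\omega)$ and $[v^nf]_\ast=\mathrm{Mult}(B^N_\omega)$.

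\emph{Cyclicity.} Since $1\in[v^nf]_\ast$, Lemma \ref{lem:*cyclic is cyclic} shows that $v^nf$ is cyclic in $B^N_\omega$. Write $v^nf=v^n\cdot f$ with $f\in B^N_\omega$ and $v^n\in\mathrm{Mult}(B^N_\omega)$. By Lemma \ref{lem:cyclicProducts}, $f$ is cyclic, and so is $v^n$ (the latter also follows from Corollary \ref{cor:u^n} and Lemma \ref{lem:*cyclic is cyclic}).

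\emph{Smirnov-class statements.} First, $f=\frac{v^nf}{v^n}$. The numerator is a multiplier and $v^n$ is weak$^\ast$ sequentially cyclic by Corollary \ref{cor:u^n}, so $f\in N^+_\ast(B^N_\omega)$. Second, $\frac1f=\frac{v^n}{v^nf}$. The numerator is a multiplier and the denominator is a multiplier that is cyclic in $B^N_\omega$, so $\frac1f\in N^+(B^N_\omega)$. The paper defines $N^+$ using cyclic denominators even outside the Pick setting, so this fits the definition.

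\emph{Main obstacle.} The step needing care is checking that the hypotheses of Lemma \ref{lem:weak*log} hold after normalization, in particular that $u$ remains a multiplier. This holds because $\log\|f\|_\infty$ is a constant and the constant $1$ is a multiplier. The other point to verify is that Lemma \ref{lem:cyclicProducts} applies: $f\in B^N_\omega$ by hypothesis, and $v^n\in\mathrm{Mult}(B^N_\omega)$. Everything else follows directly from results already established.
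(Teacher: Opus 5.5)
Your proof is correct and follows the paper's argument. You normalize to $\|f\|_\infty\le 1$, apply Lemma \ref{lem:weak*log} to get $[v^nf]_\ast=\mathrm{Mult}(B^N_\omega)$, deduce that $v^nf$ and hence $f$ is cyclic via Lemma \ref{lem:cyclicProducts}, and read off $f=v^nf/v^n\in N^+_\ast(B^N_\omega)$ and $1/f=v^n/(v^nf)\in N^+(B^N_\omega)$. You also make explicit two points the paper leaves implicit or mis-cites: the normalization (that $u=-u_0+(\log\|f\|_\infty)v$ remains a multiplier), and the use of Lemma \ref{lem:*cyclic is cyclic} for the step from $1\in[v^nf]_\ast$ to cyclicity of $v^nf$, which is the right reference where the paper's proof cites Lemma \ref{lem:weak*}.
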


\begin{proof} Let $f$ be as in the hypothesis of the theorem. We may assume that $\|f\|_\infty \le 1$. Then $f$ satisfies the hypothesis of Lemma \ref{lem:weak*log} and we conclude that there is a $n\in \N$ and a $weak^\ast$ sequentially cyclic multiplier $v$ such that $\phi=v^nf\in \mathrm{Mult}(B^N_\omega)$ and $1\in [v^n f]_\ast$. Then Lemma \ref{lem:weak*} implies that $\phi=v^nf$ is cyclic in $B^N_\omega$, and by Lemma \ref{lem:cyclicProducts} we can conclude that $f$ is cyclic in $B^N_\omega$. 

Furthermore, $f=\phi/v^n\in N^+_\ast(B^N_\omega)$ and $\frac{1}{f}=\frac{v^n}{v^n f}\in N^+(B^N_\omega)$.
\end{proof}

\section{Example: Cyclicity Does Not Imply \texorpdfstring{weak* sequential cyclicity}{weak$^\ast$ sequential cyclicity}}
\label{Sec:Example}

Let $S(z)$ be a  non-trivial singular inner function that is cyclic in the Bergman space of the unit disc $\D$, $$L^{2}_{a}=\{f\in \mathrm{Hol}(\D): \int_{\D}|f|^2 \frac{dxdy}{\pi}<\infty\}.$$ Such a function exists, see  \cite{KorenblumBeurlingtype, Korenblumcyclicelements, Robertscyclicinner}). Note that $\mathrm{Mult}(H^2(dB))=H^\infty(\Bd)$.

\begin{thm} The function $f(z,w)=S(z)\in H^\infty(\mathbb B_2)$ is cyclic in $H^2(\partial \mathbb B_2)$, but it is not $weak^\ast$ sequentially cyclic in $H^\infty(\mathbb B_2)$. 
\end{thm}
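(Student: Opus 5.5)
The plan is to treat the two assertions separately. Cyclicity comes from a slice decomposition that reduces $H^2(\partial\mathbb B_2)$ to weighted Bergman spaces on $\D$. Failure of $weak^\ast$ sequential cyclicity comes from restricting to the slice $w=0$ and using the inner–outer structure of $H^\infty(\D)$.

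\emph{Cyclicity.} The first step is to compute norms of functions of the form $w^k g(z)$ in $H^2(\partial\mathbb B_2)$. The push-forward of $\sigma$ under $(z,w)\mapsto z$ is normalized area measure $dA/\pi$ on $\D$, and $|w|^2=1-|z|^2$ on the sphere. Hence
\[
\|w^k g\|^2_{H^2(\partial\mathbb B_2)}=\int_{\D}|g(z)|^2(1-|z|^2)^k\,\frac{dA(z)}{\pi}\le \|g\|^2_{L^2_a},
\]
with the integral read as the $r\to1^-$ limit. Since $S$ is cyclic in $L^2_a$, there are polynomials $p_n$ with $p_nS\to 1$ in $L^2_a$. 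Multiplying by the multiplier $w^k$ and using the inequality above gives $w^k p_n(z) f \to w^k$ in $H^2(\partial\mathbb B_2)$. So every $w^k$ lies in $[f]$. Because $[f]$ is invariant under multiplication by $z$, all monomials $z^jw^k$ lie in $[f]$, and polynomials are dense, so $f$ is cyclic.

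\emph{Not $weak^\ast$ sequentially cyclic.} Suppose, for a contradiction, that $u_n\in H^\infty(\mathbb B_2)$ satisfy $u_nf\to1$ $weak^\ast$. By the Brown–Shields characterization recalled in Section 3, this means $\|u_nf\|_\infty\le M$ and $u_n f\to 1$ pointwise. Restrict to the slice $w=0$ and set $h_n(z)=u_n(z,0)S(z)$. Then $h_n\in H^\infty(\D)$, $\|h_n\|_\infty\le M$, and $h_n\to1$ pointwise on $\D$. Also $u_n(\cdot,0)\in H^\infty(\D)$, and since $|S|=1$ a.e. on $\partial\D$, its boundary values satisfy $|u_n(\cdot,0)|=|h_n|\le M$ a.e. Hence $\|u_n(\cdot,0)\|_\infty\le M$, which gives
\[
|h_n(z)|\le M|S(z)| \quad\text{for all } z\in\D.
\]
Letting $n\to\infty$ yields $1\le M|S(z)|$ on $\D$. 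This contradicts $\inf_{\D}|S|=0$, which holds for any nontrivial singular inner function; for example, $S$ tends to $0$ radially at $\mu$-almost every point of the singular measure.

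The only step that needs care is the norm identity for $w^kg$ in the cyclicity part, namely the push-forward computation of $\sigma$ and the fact that the multiplier $w^k$ carries $L^2_a$-convergence to $H^2(\partial\mathbb B_2)$-convergence. The restriction argument in the second part is routine once one notes that dividing by an inner function does not increase the sup norm.
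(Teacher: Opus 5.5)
Your proof is correct. The cyclicity half is the paper's argument. You both show that a function of $z$ alone has the same norm in $H^2(\partial\mathbb B_2)$ as in $L^2_a$ (you via the push-forward of $\sigma$, the paper via Taylor coefficients), and then lift the approximating polynomials $p_n(z)$. Your case $k=0$ already gives $1\in[f]$, so the monomials $w^k$ with $k\ge 1$ are not needed.

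In the second half, you and the paper both restrict to the slice $w=0$ to get $u_n(\cdot,0)S\to 1$ weak$^\ast$ in $H^\infty(\mathbb D)$, and then the routes diverge.

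\begin{itemize}
\item The paper passes to the Hilbert space. It uses that weak$^\ast$ sequential cyclicity implies cyclicity in $H^2(\mathbb D)$ (as in Lemma~\ref{lem:*cyclic is cyclic}), and then Beurling's theorem that cyclic vectors of $H^2(\mathbb D)$ are outer.
\item You stay in $H^\infty(\mathbb D)$. You use $\|uS\|_\infty=\|u\|_\infty$ for inner $S$ to get the pointwise bound $1\le M|S|$ on $\mathbb D$.
\end{itemize}

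Your route is more elementary and self-contained. It needs no Beurling theorem, and it makes the obstruction quantitative: weak$^\ast$ sequential cyclicity forces the inner factor to be bounded below. It also works verbatim for any $\phi=\theta F\in H^\infty(\mathbb D)$ with a nonconstant inner factor $\theta$, since $\|u_nF\|_\infty=\|u_n\phi\|_\infty$.

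The paper's route fits its general framework better. Any obstruction to cyclicity in the Hilbert space automatically transfers to weak$^\ast$ sequential cyclicity in the multiplier algebra, and only the easy direction of the equivalence it cites is used.

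For $\inf_{\mathbb D}|S|=0$ you can avoid the radial-limit theorem. If $|S|\ge 1/M$, then $1/S$ is bounded analytic with unimodular boundary values, so $|1/S|\le 1$ and $|S|\equiv 1$, which forces $S$ to be constant.
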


\begin{proof} Using multinomial notation the reproducing kernel for $H^2(\partial \mathbb B_2)$ is $$k_\lambda(z)=\frac{1}{(1-\la z,\lambda\ra)^2}=\sum_{n=0}^\infty (n+1) \sum_{|\alpha|=n} \frac{n!}{\alpha !} \overline{\lambda}^\alpha z^\alpha.$$
Hence if $h\in \mathrm{Hol}(\partial \mathbb B_2)$ has Taylor series 
$$h(z,w)=\sum_{n=0}^\infty \sum_{\alpha_1+\alpha_2=n} \hat h(\alpha_1,\alpha_2) z^{\alpha_1}w^{\alpha_2},$$
then for $g(z)=h(z,0)$ we have
\begin{align*}\|h\|^2_{H^2(\partial \mathbb B_2)}&= \sum_{n=0}^\infty \frac{1}{n+1}\sum_{\alpha_1+\alpha_2=n} \frac{\alpha_1! \alpha_2!}{n!}| \hat h(\alpha_1,\alpha_2)|^2\\
&\ge   \sum_{n=0}^\infty \frac{1}{n+1}  |\hat h(n,0)|^2 = \|g\|^2_{L^2_a},\end{align*}
and equality holds, whenever $h(z,0)=h(z,w)$ for all $(z,w)\in \mathbb B_2$.

Since $S$ is cyclic in $L^2_a$, there is a sequence of polynomials $\{p_{n}(z)\}$ such that
\[
\| p_{n} S- 1 \|_{L^{2}_{a}} \longrightarrow 0.
\]
Then with $q_n(z,w)=p_n(z)$ we have $\|q_nf-1\|^2_{H^2(\partial \mathbb B_2)}=\| p_{n} S- 1 \|_{L^{2}_{a}} \longrightarrow 0$, hence $f$ is cyclic in $H^2(\partial \mathbb{B}_{2})$.

Now suppose, for contradiction, that $f$ is also $weak^\ast$ sequentially cyclic in $H^\infty(\mathbb B_2)$. Then there exists a sequence of polynomials $\{q_{n}(z,w)\}$ such that
\[
q_{n} f \longrightarrow 1
\quad \text{in the weak}^\ast \text{ topology of } H^\infty(\{ |z|^{2} + |w|^{2} < 1 \}).
\]

Write $p_n(z)=q_n(z,0)$ and observe that
\[
\sup_{|z|^{2}+|w|^{2}<1} |q_{n}(z,w) f(z,w)|
\geq
\sup_{|z|<1} |q_{n}(z,0) f(z,0)|
=
\sup_{|z|<1} |p_{n}(z) S(z)|.
\]
Hence, $p_{n} S \to 1$ in the $weak^\ast$ sense on $\D=\{|z|<1\}$. 
This implies that $S$ is $weak^\ast$-sequentially cyclic in $H^{2}(\D)$.

However, in $H^{2}(\D)$ cyclicity and $weak^\ast$ sequential cyclicity are equivalent notions. Therefore $S$ would be cyclic in $H^{2}(\D)$. But in $H^{2}(\D)$, the cyclic functions are precisely the outer functions, and a non-trivial singular inner function cannot be outer. This is a contradiction.

Thus, $f$ is cyclic in $H^{2}(\partial \mathbb{B}_{2})$, but it is not $weak^\ast$ sequentially cyclic.
\end{proof} 

Similarly, one obtains examples of this type  for any dimension $d$. Indeed, the singular inner function $S$ used above is cyclic in any weighted Bergman space of the unit disc with standard weights $(1-|z|^2)^\gamma dxdy$, $\gamma>-1$. Then we can define
\[
f(z_{1}, z_{2}, \dots, z_{d})
=
S(z_{1}).
\]
Repeating the above argument will show that $f$ is cyclic, but not $weak^\ast$ sequentially cyclic in $H^{2}(\partial \mathbb{B}_{d})$.

\section{Appendix}

Corollary \ref{cor:Pick} is based on Lemma 3.3 of \cite{AlemanHartzMcCarthyRichter2019}. Here we give a simple direct proof of the hypothesis for Lemma \ref{lem:weak*} for  Pick spaces.

\begin{proof} Assume that $H\subseteq {\mathrm{Hol}(\Bd)}$ has a normalized complete Pick kernel. Let $\psi \in \mathcal S(\phi)$, $\|\psi\|_{\MH}\le 1$. Then there are $u_n\in \MH$ such that $u_n\phi\to \psi$ $weak^\ast$ in $\MH$. Hence  $u_n\phi\to \psi$ weakly in $H$, and that implies that for some convex combination $v_n$ of the $u_n$'s we have $v_n\phi\to \psi $ in the norm of $H$. Then from the factorization theorem for Pick spaces, there are multipliers $a_n, b_n$ with $\|b_n\|_{\MH} \le 1$ and $\|a_n\|_{\MH} \le \|v_n\phi-\psi\|_H \to 0$ and $v_n\phi-\psi= \frac{a_n}{1-b_n}$, see \cite{AlemanHartzMcCarthyRichter2017}, Theorem 1.1. Now choose $r_n= 1-\sqrt{\|a_n\|_{\MH}}$.  Then
\begin{align*}
\|\frac{1-b_n}{1-r_nb_n} v_n \phi\|_{\MH}
&\le \| \frac{1-b_n}{1-r_nb_n}\psi\|_{\MH} +\| \frac{a_n}{1-r_nb_n}\|_{\MH}\\ 
& \le  \| \frac{1-b_n}{1-r_nb_n}\|_{\MH}\|\psi\|_{\MH} +   \| \frac{1}{1-r_n b_n}\|_{\MH} \| a_n\|_{\MH}\\
&\le \|\frac{1-z}{1-r_nz}\|_{H^\infty}+ \frac{\| a_n\|_{\MH}}{1-r_n}\\
&=\frac{2}{1+r_n}+\sqrt{\| a_n\|_{\MH}}\\
&\to 1.
 \end{align*} 
Here the last inequalities followed from von Neumann's inequality since 
$\|b_n\|_{\MH}=\|M_{b_n}\|_{\HB(H)}\le 1$. 

\end{proof}

\bibliographystyle{amsplain}  
\bibliography{Reference} 
\end{document}